\newtheorem{theorem}{Theorem}[section]
\newtheorem{lemma}[theorem]{Lemma}
\newtheorem{prop}[theorem]{Proposition}
\newtheorem{fact}[theorem]{Fact}
\newtheorem{definition}[theorem]{Definition}
\theoremstyle{definition}
\theoremstyle{remark}
\newtheorem*{remark*}{Remark}
\newcommand{\snorm}[1]{\lVert#1\rVert}
\newcommand{\imod}[1]{~\mathrm{mod}~#1}
\newcommand{\eps}{\varepsilon}
\newcommand{\mb}{\mathbb}
\newcommand{\mbm}{\mathbbm}
\newcommand{\mc}{\mathcal}
\newcommand{\mr}{\mathrm}
\newcommand{\ol}{\overline}
\newcommand{\on}{\operatorname}
\newcommand{\wt}{\widetilde}
\begin{document}

\title{Improved Bounds for Szemer\'{e}di's Theorem}

\author[A1]{James Leng}
\address{Department of Mathematics, UCLA, Los Angeles, CA 90095, USA}
\email{jamesleng@math.ucla.edu}

\author[A2]{Ashwin Sah}
\author[A3]{Mehtaab Sawhney}
\address{Department of Mathematics, Massachusetts Institute of Technology, Cambridge, MA 02139, USA}
\email{\{asah,msawhney\}@mit.edu}
\thanks{Leng was supported by NSF Graduate Research Fellowship Grant No.~DGE-2034835. Sah and Sawhney were supported by NSF Graduate Research Fellowship Program DGE-2141064.}

\maketitle
\begin{abstract}
Let $r_k(N)$ denote the size of the largest subset of $[N] = \{1,\ldots,N\}$ with no $k$-term arithmetic progression. We show that for $k\ge 5$, there exists $c_k>0$ such that
\[r_k(N)\ll N\exp(-(\log\log N)^{c_k}).\]
Our proof is a consequence of recent quasipolynomial bounds on the inverse theorem for the Gowers $U^k$-norm as well as the density increment strategy of Heath-Brown and Szemer\'{e}di as reformulated by Green and Tao. 
\end{abstract}

\section{Introduction}\label{sec:introduction}
Let $[N] = \{1,\ldots, N\}$ and $r_k(N)$ denote the size of the largest $S\subseteq[N]$ such that $S$ has no $k$-term arithmetic progressions. The first nontrivial upper bound on $r_3(N)$ came from work of Roth \cite{Rot54} which proved
\[r_3(N)\ll N(\log\log N)^{-1}.\]
A long series of works improved this bound, including works of Heath-Brown \cite{Hea87}, Szemer{\'e}di \cite{Sze90}, Bourgain \cite{Bou99,Bou08}, Sanders \cite{San12, San11}, Bloom \cite{Blo16}, and Bloom and Sisask \cite{BS20}. In breakthrough work, Kelley and Meka \cite{KM23} very recently proved 
\[r_3(N)\ll N\exp(-c(\log N)^{1/12});\]
the constant $1/12$ was refined to $1/9$ in work of Bloom and Sisask \cite{BS23}.

For higher $k$, a long-standing conjecture of Erd\H{o}s and Tur\'{a}n stated that $r_k(N) = o(N)$. In seminal works, Szemer\'{e}di \cite{Sze70,Sze75} first established the estimate $r_4(N) = o(N)$
and then established his eponymous theorem that
\[r_k(N) = o(N).\]
Due to uses of van der Waerden theorem and the regularity lemma (which was introduced in this work), Szemer\'{e}di's density saving was exceedingly small. In breakthrough work, Gowers \cite{Gow98,Gow01} introduced higher order Fourier analysis and proved the first ``reasonable'' bounds for Szemer\'{e}di's theorem:
\[r_k(N) < N(\log\log N)^{-2^{-2^{k+9}}}.\]
The only improvement to this result for $k\ge 4$ was work of Green and Tao \cite{GT09,GT17} which ultimately established that 
\[r_4(N)\ll N(\log N)^{-c},\]
and recent work of the authors \cite{LSS24} which proved 
\[r_5(N)\ll N\exp(-(\log\log N)^c).\]

Our main result is an extension of this bound for all $k\ge 5$.
\begin{theorem}\label{thm:main}
Fix $k\ge 5$. There is $c_k\in(0,1)$ such that
\[r_k(N)\ll N\exp(-(\log\log N)^{c_k}).\]
\end{theorem}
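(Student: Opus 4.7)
The plan is to combine three ingredients: a generalized von Neumann estimate, the recently established quasipolynomial inverse theorem for the Gowers $U^{k-1}$-norm, and the density-increment strategy of Heath-Brown and Szemer\'edi as reformulated by Green and Tao.

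Suppose $A\subseteq[N]$ has density $\delta$ and contains no $k$-term arithmetic progression. A standard generalized von Neumann argument forces the balanced function $f=1_A-\delta$ to satisfy $\|f\|_{U^{k-1}[N]}\gg\delta^{O_k(1)}$, since otherwise the count of $k$-APs in $A$ would be close to the expected $\delta^k N^2$. Applying the quasipolynomial $U^{k-1}$ inverse theorem then yields that $f$ correlates, at level at least $\exp(-\log^{O_k(1)}(1/\delta))$, with a degree-$(k-2)$ nilsequence $F(g(n)\Gamma)$ on a nilmanifold $G/\Gamma$ whose dimension, complexity, and Lipschitz norm are bounded by $M\leq\exp(\log^{O_k(1)}(1/\delta))$.

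The next task is to convert this correlation into a genuine density increment on a sub-progression. Using the Green--Tao factorization theorem together with quantitative equidistribution of polynomial sequences on nilmanifolds, one finds a long sub-arithmetic progression $P\subseteq[N]$ of length $|P|\geq N^{M^{-O_k(1)}}\geq N^{\exp(-\log^{O_k(1)}(1/\delta))}$ on which the nilsequence is essentially constant, and on which $A$ has density $\delta'\geq\delta(1+c_k)$ for an absolute constant $c_k>0$. After an affine change of variables, we are facing a new instance of the problem at density $\delta'$ on an interval of length $|P|$. Iterating, we obtain $\delta_T\geq(1+c_k)^T\delta_0$, which would exceed $1$ after $T=O(\log(1/\delta_0))$ steps, while the length shrinks so that $\log\log N_T\geq\log\log N_0-T\cdot\log^{O_k(1)}(1/\delta_0)$. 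Unless the iteration exhausts the ambient interval first, the density would exceed $1$, contradicting the $k$-AP-free hypothesis. This forces $\log\log N_0\lesssim\log^{O_k(1)}(1/\delta_0)$, which rearranges to $\delta_0\ll\exp(-(\log\log N)^{c_k})$ for an appropriate $c_k\in(0,1)$.

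The principal difficulty lies in carrying out the density-increment reduction while maintaining quasipolynomial quantitative control throughout. Passing from a nilsequence correlation to a sub-progression of length $N^{M^{-O_k(1)}}$ on which the nilsequence is nearly constant requires careful deployment of the Green--Tao factorization theorem and polynomial Leibman-type equidistribution results, and any additional layer of exponential loss at this stage would collapse the final bound back to a Gowers-type double-logarithmic savings. A secondary subtlety is the interplay between the polynomial degree of the nilsequence, the step size and complexity of the factorization, and the ultimate dependence of $|P|$ on $1/\delta$; propagating the quasipolynomial bound cleanly through all of these reductions constitutes the bulk of the technical labor.
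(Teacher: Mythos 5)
Your outline names the right inputs (quasipolynomial $U^{k-1}$ inverse theorem, a Heath-Brown/Szemer\'edi-style density increment) and the final iteration calculus is correct, but the middle step — converting the correlation into a \emph{multiplicative} density increment — contains a genuine gap as written.

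Having obtained a single nilsequence $F(g(n)\Gamma)$ with $|\mathbb{E}_{n\in[N]}[f(n)\overline{F(g(n)\Gamma)}]|\ge\eps$ for $\eps=\exp(-\log^{O_k(1)}(1/\delta))$ and $f=1_A-\delta$, passing to a sub-progression $P$ on which the nilsequence is nearly constant gives at best an \emph{additive} density increment of order $\eps$. Indeed, since $\mathbb{E}f=0$, splitting $[N]$ into such progressions and applying the triangle/pigeonhole argument only produces some $P$ with $\mathbb{E}_{n\in P}f\gg\eps$, i.e.\ density $\delta+\Omega(\eps)$. Since $\eps\ll\delta$ here, this is far weaker than $(1+c_k)\delta$; feeding the additive increment through your iteration would produce the weaker triple-logarithmic savings $\delta\ll\exp(-(\log\log\log N)^{\Omega_k(1)})$, which the paper explicitly says is what the naive route yields. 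The multiplicative increment comes from the Heath-Brown/Szemer\'edi step you cite by name but do not actually execute: one must iterate the inverse theorem to build an entire \emph{factor} $\mathcal{B}=\bigvee_i\mathcal{B}_{h_i,K}$ from up to $\exp(\log^{O_k(1)}(1/\delta))$ nilsequences so that $\|f-\Pi_{\mathcal{B}}f\|_{U^{k-1}}$ is small, then argue that the set $\{\Pi_{\mathcal{B}}f>(1+c')\delta\}$ has measure $\gtrsim\delta^k$, and only then locate a long progression inside that set (cf.\ Lemmas~\ref{lem:factor-iterate} and~\ref{lem:dens-increm}).

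A second, independent issue is the appeal to ``the Green--Tao factorization theorem together with quantitative equidistribution'' to produce the long progression on which the nilsequence(s) are nearly constant. The standard factorization from \cite{GT10c} proceeds by induction on dimension and gives an exponent depending exponentially on $d$, which would again collapse the final bound below the desired threshold. The heart of this paper is precisely the Schmidt-type decomposition (Lemma~\ref{lem:schmidt-nilman}) giving progressions of length $N^{\Omega_k(1/(Td)^{O_k(1)})}$ — polynomial in $d$ and in the number $T$ of nilsequences — achieved by inducting on the length of the filtration rather than on dimension. Your proposal treats this as a black box when it is in fact the step where the quasipolynomial savings is preserved or lost.
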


\subsection{Proof outline and techniques}\label{sub:outline}
\subsubsection{Local and global inverse theorems}

The primary input to our result will be the main result of recent work of the authors \cite{LSS24b}, i.e., quasipolynomial bounds on the inverse theorem for the Gowers $U^{k+1}$-norm. Given an inverse theorem, the deduction of Szemer\'{e}di's theorem via a standard density increment strategy is essentially folklore and was recorded in work of Green and Tao \cite{GT10c} (although, prior to \cite{LSS24b} the resulting bounds would be far from matching those of Gowers \cite{Gow01}). However, if one naively follows this script using \cite{LSS24b}, one obtains a bound of $N\exp(-(\log\log\log N)^{-\Omega_k(1)})$ which is weaker than the work of Gowers \cite{Gow01}. Furthermore, Gowers's argument makes use of a ``local'' inverse theorem that in fact gives a slightly stronger correlation compared to the bound given for the ``global'' inverse theorem in \cite{LSS24b} (namely, polynomial versus quasipolynomial). Thus, this global nature of \cite{LSS24b} must be exploited. Additionally, use of global inverse theorems necessitates understanding of nilsequences and polynomial sequences on nilpotent Lie groups, as opposed to merely polynomials as in the work of Gowers \cite{Gow01}.

\subsubsection{Schmidt-type decomposition problems}
This is done via the improved density increment strategy of Heath-Brown \cite{Hea87} and Szemer\'{e}di \cite{Sze90} which involves extracting a set of functions to correlate with instead of simply one and using this to give a multiplicative density increment. Such a strategy was given a robust formulation in work of Green and Tao \cite{GT09} on four-term progressions; in particular, their reformulation avoided the explicit Fourier-analytic formulas used in \cite{Hea87,Sze90} and thus is applicable to the higher order setting. The strategy here runs smoothly given the inverse theorem, modulo resolving a certain Schmidt-type problem for nilsequences. In particular, given a polynomial sequence $g(n)$ with $g(0) = \mr{id}_{G}$ on a nilmanifold $G/\Gamma$ of degree $k$ with complexity $M$ and dimension $d$, one needs to prove that 
\[\min_{1\le n\le N}d_{G/\Gamma}(\mr{id}_G,g(n)\Gamma)\ll M^{O_k(d^{O_k(1)})}N^{-1/d^{O_k(1)}}.\]
In particular, the polynomial dependence on dimension within the exponent is key.

We in fact require a certain slightly stronger result (decomposing $[N]$ into long arithmetic progressions $P$ such that the diameters of the sets $\{g(n)\Gamma\colon n\in P\}$ are small), which is the heart of the matter for this work. When the underlying nilpotent group $G$ is abelian, this is easily deduced from a result of Schmidt \cite{Sch77} (see Lemma~\ref{lem:schmidt-gen}, or \cite[Section~6]{GT09} in the quadratic case).

For general degree $2$ nilmanifolds such a problem was implicitly solved in work of Green and Tao \cite{GT09} and for degree $3$ nilmanifolds it was essentially solved in recent work of the authors \cite{LSS24}. More precisely, \cite{LSS24} essentially proves that given a list of bracket expressions $(a_in \lfloor b_i n\rfloor \lfloor c_i n\rfloor)_{1\le i\le d}$ that
\[\min_{1\le n\le N}\snorm{a_in \lfloor b_i n\rfloor \lfloor c_i n\rfloor}_{\mb{R}/\mb{Z}}\le N^{-1/d^{O(1)}}\]
and via an explicit computation with fundamental domains on degree $3$ nilmanifolds one may reduce to such a situation. The proof given in \cite{LSS24} relies on the fact that $3$ is sufficiently small and in particular that it is possible to reduce to a situation in which there are no ``nested integer part operations'' as one attempts to solve the ``bracket Schmidt'' problem in one go.

\subsubsection{Iterative Schmidt refinement}\label{sub:iterative-schmidt}
The key observation required for our work, at least at a heuristic level, is a procedure for solving such ``bracket Schmidt'' problems even when there are nested brackets. As a simple example, consider bracket expressions $(a_in \lfloor b_i n\lfloor c_i n\rfloor\rfloor)_{1\le i\le d}$. We will solve the Schmidt problem via iteratively ``reducing'' the number of brackets from the inside-out (at the cost of passing to subprogressions). In particular, using Dirichlet's theorem, one can break $[N]$ into arithmetic progressions $P$ each of length $N^{1/d^{O(1)}}$ such that when restricted to each arithmetic progression, every function $\lfloor c_i n\rfloor$ is a linear function (i.e., it is a \emph{locally linear function} on each $P$). Since the only locally linear functions on a progression agree with genuinely linear functions, we can replace $\lfloor c_i n\rfloor$ by $d_{i,P}n + e_{i,P}$ and reduce to considering the bracket expression $a_in \lfloor b_i n (d_{i,P} n + e_{i,P})\rfloor$ when restricted to $P$. One can then iterate this argument on the ``inner quadratics'' $b_i n (d_{i,P} n + e_{i,P})$ (essentially using abelian Schmidt as discussed above for degree $2$ in this case). We may find a decomposition into long arithmetic progressions $Q$ such that $\lfloor b_i n (d_{i,P} n + e_{i,P})\rfloor$ is locally quadratic (and hence agrees with a global quadratic) on each $Q$. Thus, restricted to any such $Q$, our original functions $a_in\lfloor b_in\lfloor c_in\rfloor\rfloor$ agrees with a genuine cubic. Finally, we can decompose these progressions $Q$ into ones where the cubics are approximately constant $\imod{1}$ (using abelian Schmidt for degree $3$). While in theory this approach can be made to work for all such bracket Schmidt problems, this however necessitates working with bracket functions and rather quickly becomes messy to handle. 

This procedure can be adapted to work with polynomial sequences on nilmanifolds directly due to an unpublished observation of Green and Tao. This is the approach we take in the present work. The crucial point is that given a polynomial sequence $g(n)$ with respect to a group $G$ given a filtration $G_0 = G_1 \geqslant G_2 \geqslant \cdots \geqslant G_k \geqslant \mr{Id}_G$, the polynomial sequence $g(n) \imod G_2$ is a standard polynomial. Thus one can apply Schmidt to a standard polynomial and therefore (after passing to long subprogressions) one may factor $g(n) \imod G_2 = \eps(n) \cdot \gamma(n)$ where $\eps$ is smooth and $\gamma$ lies in the lattice $\Gamma \imod G_2$. One may then lift $\eps,\gamma$ from $G\imod G_2$ to $\wt{\eps},\wt{\gamma}$ on $G$ and analyze the polynomial sequence $\wt{\eps}^{-1} g \wt{\gamma}^{-1}$ which now lives in the group $G_2$. One can iterate this procedure and inductively reduce $G_2$ to $G_3$ and so on, which allows us to solve the Schmidt problem for our nilmanifold. We remark that this procedure is an induction on the length of the filtration whereas the (closely related) approach taken in \cite{GT10c} is phrased as an induction on dimension. This difference is crucial for getting bounds in which the exponent depends polynomially on dimension.

\subsection{Organization and notation}\label{sub:notation}
All definitions regarding nilsequences and associated complexity will be exactly as in \cite[Sections~3--4]{LSS24b}. We refer the reader to that paper for all such definitions; we will only require degree filtrations in this paper. 

We use standard asymptotic notation. Given functions $f=f(n)$ and $g=g(n)$, we write $f=O(g)$, $f \ll g$, $g=\Omega(f)$, or $g\gg f$ to mean that there is a constant $C$ such that $|f(n)|\le Cg(n)$ for sufficiently large $n$. We write $f\asymp g$ or $f=\Theta(g)$ to mean that $f\ll g$ and $g\ll f$, and write $f=o(g)$ or $g=\omega(f)$ to mean $f(n)/g(n)\to0$ as $n\to\infty$. Subscripts on asymptotic notation indicate dependence of the bounds on those parameters. We will use the notation $[x] = \{1,2\ldots,\lfloor x\rfloor\}$. In this paper $x = \lfloor x\rfloor + \{x\}$ where $\{x\}\in [0,1)$ and $\lfloor x\rfloor \in \mb{Z}$; we remark this is different than in \cite{LSS24b}. We write $\snorm{x}_{\mb{R}/\mb{Z}}=\on{dist}(x,\mb{Z})$ for $x\in\mb{R}$. Furthermore throughout this paper we abusively write $\log$ for $\max(\log(\cdot), e^e)$; this is to avoid trivial issues with small numbers.

Finally, in terms of organization, in Section~\ref{sec:schmidt} we solve the Schmidt problem for nilsequences and in Section~\ref{sec:proof} we prove Theorem~\ref{thm:main}.

\subsection*{Acknowledgments}
The third author thanks Mark Sellke and Dmitrii Zakharov for helpful and motivating conversations. We thank Ben Green for helpful comments on the manuscript. We thank Zach Hunter for various minor corrections.

\section{Schmidt's problem for nilsequences}\label{sec:schmidt}
In this section, we prove that given a list of nilsequences on $[N]$, one can decompose $[N]$ into a controlled set of arithmetic progressions such that the nilsequences are almost constant on these sequences.

\begin{lemma}\label{lem:schmidt-nilman}
Consider nilmanifolds $G_i/\Gamma_i$ for $1\le i\le T$, each given a degree $k$ filtration, having complexity bounded by $M$, dimension bounded by $d$, and for each $1\le i\le T$ let $g_i(n)$ be a polynomial sequence with respect to the specified degree $k$-filtration on $G_i$.

We may decompose $[N]$ into disjoint arithmetic progressions $\mc{P}_1,\ldots, \mc{P}_L$ such that following conditions hold:
\begin{itemize}
    \item $N/L\ge N^{\Omega_k(1/(Td)^{O_k(1)})}/2$;
    \item We have
    \[\max_{\substack{1\le i\le T\\1\le j\le L}}\max_{n,n'\in\mc{P}_j}d_{G_i/\Gamma_i}(g_i(n)\Gamma_i,g_i(n')\Gamma_i)\le M^{O_k(d^{O_k(1)})}\cdot N^{-\Omega_k(1/(Td)^{O_k(1)})}.\]
\end{itemize}
\end{lemma}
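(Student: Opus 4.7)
The plan is to induct on the length $k$ of the filtration. The base case $k=1$ corresponds to an abelian filtration, where each $g_i(n)$ becomes a polynomial sequence into a torus and the desired decomposition into arithmetic progressions follows from the abelian Schmidt statement of this section (Lemma~\ref{lem:schmidt-gen}) applied simultaneously to all $T$ sequences.

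For the inductive step with filtration $G_i = (G_i)_1 \ge (G_i)_2 \ge \cdots \ge (G_i)_k \ge \mr{Id}_{G_i}$, the first move is to project modulo $(G_i)_2$. The image $\bar g_i(n)$ of $g_i(n)$ in the abelianization $G_i/(G_i)_2$ is a polynomial sequence of degree at most $k$ into an abelian Lie group, so after a choice of Mal'cev basis it becomes a standard vector-valued polynomial in at most $d$ coordinates. I would apply the abelian Schmidt lemma to the $T$ projected sequences at once, decomposing $[N]$ into arithmetic progressions $P$ of length at least $N^{\Omega_k(1/(Td)^{O_k(1)})}$ on each of which one has a factorization $\bar g_i(n) = \eps_i(n) \gamma_i(n)$, with $\eps_i$ slowly varying (Lipschitz of size at most a small polynomial in $M$) and $\gamma_i(n) \in \Gamma_i/(\Gamma_i \cap (G_i)_2)$.

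Next, I would lift $\eps_i$ and $\gamma_i$ back to polynomial sequences $\wt\eps_i, \wt\gamma_i$ on $G_i$ via a Mal'cev section, arranged so that $\wt\gamma_i(n)\in\Gamma_i$ and $\wt\eps_i(n)$ stays close to $\mr{Id}_{G_i}$ in Mal'cev coordinates. Then
\[h_i(n) := \wt\eps_i(n)^{-1} g_i(n) \wt\gamma_i(n)^{-1}\]
is a polynomial sequence in $G_i$ which in fact takes values in $(G_i)_2$, since it is trivial modulo $(G_i)_2$ by construction. The subgroup $(G_i)_2$ inherits a filtration $(G_i)_2 \ge (G_i)_3 \ge \cdots \ge (G_i)_k \ge \mr{Id}_{G_i}$ of length $k-1$. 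Applying the inductive hypothesis to the collection of $h_i$ restricted to each $P$ (with lattice $\Gamma_i \cap (G_i)_2$) produces a further decomposition into arithmetic progressions $P'$ on which each $h_i$ is almost constant modulo this lattice. Combining this with the smoothness of $\wt\eps_i$ on $P'$ and the fact that $\wt\gamma_i$ takes values in $\Gamma_i$ recovers the desired conclusion for the original $g_i(n)\Gamma_i$.

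The main obstacle, which is the heart of the technical work, is executing the lifting step with only polynomial-in-$d$ complexity loss. Specifically, one must (i) choose the Mal'cev lift so that $\wt\gamma_i(n)\in\Gamma_i$ without introducing a large smooth error, (ii) bound the complexity and Mal'cev coordinates of $h_i$ in $(G_i)_2$ by a polynomial in $M$ and $d$, and (iii) verify that the induced filtration on $(G_i)_2$ still has complexity and dimension bounds of the same polynomial form. Crucially, inducting on filtration length rather than total dimension ensures that the progression-length loss at each of the $k-1$ steps is of the form $N^{-1/(Td)^{O_k(1)}}$, so that composing these losses still yields the stated $N^{\Omega_k(1/(Td)^{O_k(1)})}$ lower bound, i.e., polynomial rather than exponential dependence on $d$ in the exponent.
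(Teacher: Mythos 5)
Your proposal is essentially the paper's argument: peel off the top of the filtration by applying the simultaneous abelian Schmidt statement (Lemma~\ref{lem:schmidt-gen}) to the Mal'cev coordinates of $g_i$ modulo $G_{i,2}$, factor the projected sequence into a slowly-varying part times a lattice part (the paper does this concretely via Lemma~\ref{lem:small+int}, splitting each coordinate polynomial into a small piece and an integer-valued piece, rather than via an abstract Mal'cev section), pass to $h_i=\wt{\eps}_i^{-1}g_i\wt{\gamma}_i^{-1}$ taking values in $G_{i,2}$, induct, and recombine using Lipschitz/metric comparison estimates; the key point you correctly isolate --- inducting on the depth of the filtration rather than on dimension, so the losses compound only polynomially in $d$ --- is exactly the paper's.

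One bookkeeping point in your induction scheme does not typecheck as written. You claim $h_i$ can be fed to the inductive hypothesis with the \emph{degree $(k-1)$} filtration $G_{i,2}\geqslant G_{i,3}\geqslant\cdots\geqslant G_{i,k}\geqslant\mr{Id}_{G_i}$. But $h_i$ is only known to be polynomial with respect to the original degree $k$ filtration intersected with $G_{i,2}$, i.e.\ its $j$-fold derivatives land in $G_{i,2}\cap G_{i,j}=G_{i,j}$ for $j\ge 2$, not in $G_{i,j+1}$; for instance $h_i$ can still have a genuinely degree $k$ coordinate, so it need not be a polynomial sequence of degree $k-1$ at all. The paper avoids this by keeping the degree fixed at $k$ and performing backwards induction on the \emph{type} $t$ (the largest $t$ with $G_{i,t}=G_i$): after the reduction the new filtration on $G_{i,t+1}$ is again degree $k$ but of strictly larger type, and the number of steps is bounded by $k$. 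This is a relabeling of your ``length of the filtration'' parameter rather than a different idea, but it is the correct way to make the induction close; with that fix (and the verification, as in the paper, that commutators of the lift land in $G_{i,t+1}$ because $G_{i,t}=G_i$) your outline matches the paper's proof.
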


The key ingredient in this proof is a result of Schmidt \cite{Sch77} regarding finding small fractional parts of polynomials. We will need a version of this result with explicit quantification; this is explicitly stated in work of the authors \cite[Proposition~3.7]{LSS24} although the argument is essentially verbatim from a paper of Green and Tao \cite[Appendix~A]{GT09} generalized from quadratics to all degrees.

\begin{prop}\label{prop:schmidt}
Fix an integer $k\ge 1$. There exist $c_k>0$ such that the following holds. Let $\alpha_1,\ldots,\alpha_d$ be real numbers. Then 
\[\min_{1\le n\le N}\max_{1\le i\le d}\snorm{\alpha_i n^k}_{\mb{R}/\mb{Z}}\ll_k dN^{-c_k/d^2}.\]
\end{prop}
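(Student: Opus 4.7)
The plan is to prove Proposition~\ref{prop:schmidt} by induction on $k$, following the standard Schmidt--Green--Tao Weyl-differencing argument essentially as in \cite[Appendix~A]{GT09}.

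The base case $k = 1$ is multidimensional Dirichlet: given $\alpha_1, \ldots, \alpha_d \in \mb{R}$, apply pigeonhole to the images of $\{0, 1, \ldots, N\}$ in $(\mb{R}/\mb{Z})^d$ under the map $n \mapsto (n\alpha_1, \ldots, n\alpha_d)$. Partitioning the torus into at most $N$ boxes of side at most $N^{-1/d}$, two inputs $0 \le n_1 < n_2 \le N$ must land in a common box; then $n = n_2 - n_1 \in [N]$ satisfies $\snorm{\alpha_i n}_{\mb{R}/\mb{Z}} \le N^{-1/d}$ for every $i$, which dominates $dN^{-c_1/d^2}$ for any $c_1 \le 1$.

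For the inductive step $k \ge 2$, the crucial identity is $\Delta_{h_1} \cdots \Delta_{h_k}(\alpha n^k) = k!\,\alpha h_1 h_2 \cdots h_k$, where $\Delta_h f(n) := f(n+h) - f(n)$; in other words, the $k$-th discrete derivative of $\alpha n^k$ is multilinear in the shift parameters. I would apply multidimensional Dirichlet to the $d$ real numbers $k!\,\alpha_i$ to produce a single integer $m \in [M]$ (for $M = N^{\theta}$ with $\theta$ chosen at the end) satisfying $\snorm{k!\,\alpha_i m}_{\mb{R}/\mb{Z}} \le M^{-1/d}$ for all $i$, which gives control over the ``top'' level of the iteration. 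One then propagates this control back down through the $k$ differencing operations via a combinatorial reconstruction (essentially inverting each $\Delta_{h_j}$ by averaging $\alpha(n+h_j)^k$ over $n$ in a carefully chosen progression, in the spirit of Van der Corput) to eventually produce a single $n \in [N]$ with $\snorm{\alpha_i n^k}_{\mb{R}/\mb{Z}}$ small, at a cost of polynomial-in-$d$ factors at each reconstruction step.

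The main obstacle is the quantitative bookkeeping needed to achieve the $1/d^2$ dependence in the final exponent rather than the $1/d^{O(k)}$ loss one would get from naively nesting Dirichlet applications at each of the $k$ differencing stages. The Schmidt--Green--Tao trick is to invoke multidimensional Dirichlet only once on the full multilinear form $k!\,\alpha_i h_1 \cdots h_k$ and then arrange the reconstruction so that only bounded polynomial-in-$d$ overhead is incurred per stage. Verifying that the constants $c_k$ indeed depend only on $k$ --- and not on $d$ --- through this entire procedure is the main technical content of the argument, and is the step that requires the most care to adapt verbatim from the quadratic case treated in \cite[Appendix~A]{GT09} to general $k$.
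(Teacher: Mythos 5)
The paper does not prove this proposition at all: it quotes it as a known quantitative form of Schmidt's theorem, citing \cite[Proposition~3.7]{LSS24}, whose proof is in turn ``essentially verbatim'' the argument of \cite[Appendix~A]{GT09}. That argument is not a differencing-and-reconstruction argument. It is an equidistribution/exponential-sum argument with induction on the \emph{dimension} $d$: either the orbit $n\mapsto(\alpha_1n^k,\ldots,\alpha_dn^k)$ enters a small box around $0$ (and you are done), or by a Fourier/pigeonhole step there is a nonzero integer vector $m$ of small height such that the Weyl sum $\sum_{n\le N}e\big((m_1\alpha_1+\cdots+m_d\alpha_d)n^k\big)$ is large; Weyl's inequality then gives a good rational approximation to $m\cdot\alpha$, one passes to a subprogression on which one coordinate is effectively eliminated, and one inducts with $d-1$ frequencies. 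Each of the at most $d$ steps costs a factor $N^{-c/d}$ in the exponent, which is exactly where the final exponent $c_k/d^2$ comes from.

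Your proposal takes a genuinely different route (induction on the degree $k$ via Weyl differencing at the level of points), and it has a real gap precisely at the step you leave vague: the ``combinatorial reconstruction'' that inverts the differencing operators. Knowing $\snorm{k!\,\alpha_i m}_{\mb{R}/\mb{Z}}\le M^{-1/d}$ for a single $m$ only says $\alpha_i$ is within $M^{-1/d}/(k!\,m)$ of a rational with denominator $k!\,m$; when you try to convert this into smallness of $\snorm{\alpha_i n^k}_{\mb{R}/\mb{Z}}$ for some $n\in[N]$ (say $n=k!\,mt$), the error gets multiplied by roughly $m^{k-1}t^k$, i.e.\ by a power of $N$. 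To absorb that loss you would need the top-level approximation to have quality about $N^{-(k-1)}$, and multidimensional Dirichlet only delivers $M^{-1/d}$ with $m\le M$, so you would be forced to take $M\ge N^{d(k-1)}$, putting $m$, and hence $n$, far outside $[N]$ whenever $d$ is large. In other words, un-differencing propagates errors multiplicatively in the range length, so a single application of Dirichlet to the multilinear form $k!\,\alpha_i h_1\cdots h_k$ cannot yield the claimed pointwise bound with only polynomial-in-$d$ loss; this is not a bookkeeping issue but the reason the actual proof works with exponential sums and inducts on $d$ rather than on $k$. If you want a self-contained proof, follow the Schmidt--Green--Tao scheme sketched above (large Weyl sum $\Rightarrow$ rational approximation of one linear combination $\Rightarrow$ dimension reduction on a subprogression), tracking that each of the $d$ reduction steps costs only $N^{-c_k/d}$.
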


As stated this result is for pure monomial phases and only provides a single point with small fractional part. This statement however can be ``upgraded'' via a straightforward iterative argument which is implicit in say \cite[Proposition~6.4]{GT09} (where the quadratic case is handled).

\begin{lemma}\label{lem:schmidt-gen}
Fix an integer $k\ge 0$. Consider polynomials $Q_1,\ldots,Q_d$ of degree $k$. Then there exist disjoint arithmetic progressions $\mc{P}_1,\ldots,\mc{P}_L$ such that following conditions hold:
\begin{itemize}
    \item $N/L\ge N^{\Omega_k(1/d^{O_k(1)})}/2$
    \item We have
    \[\max_{\substack{1\le i\le d\\1\le j\le L}}\max_{n,n'\in \mc{P}_j}\snorm{Q_i(n) - Q_i(n')}_{\mb{R}/\mb{Z}}\le 2\cdot N^{-\Omega_k(1/d^{O_k(1)})}.\]
\end{itemize} 
\end{lemma}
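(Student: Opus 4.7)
The approach is induction on the degree $k$. The base case $k=0$ is trivial: each $Q_i$ is constant, so one may take $L=1$ and $\mc{P}_1=[N]$. For the inductive step, assume the lemma in degree $k-1$. Given $Q_1,\ldots,Q_d$ of degree $k$, let $\alpha_i$ denote the coefficient of $n^k$ in $Q_i$, and apply Proposition~\ref{prop:schmidt} to $\{\alpha_i\}_{i=1}^d$ with search window $N^{1-\delta}$ for a parameter $\delta = \delta(k,d)>0$ to be chosen (e.g.\ $\delta = c_k/(2kd^2)$). This produces a common difference $q\in[N^{1-\delta}]$ with
\[\snorm{\alpha_i q^k}_{\mb{R}/\mb{Z}} \ll_k d\,N^{-(1-\delta)c_k/d^2}\quad\text{for every }i.\]

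Partition $[N]$ into the APs $\mc{A}_a = \{a, a+q, a+2q,\ldots\}\cap[N]$ for $a\in\{1,\ldots,q\}$, each of length $\ell_a\ge N^\delta/2$. On $\mc{A}_a$, write $\alpha_iq^k = n_i + \beta_i$ with $n_i\in\mb{Z}$ and $|\beta_i|$ bounded as above. Expanding $(a+qm)^j$ one finds
\[Q_i(a+qm) \equiv \beta_i m^k + \wt{R}_{i,a}(m)\pmod{1},\]
with $\wt{R}_{i,a}$ a real polynomial in $m$ of degree at most $k-1$. The choice of $\delta$ guarantees $|\beta_i|\ell_a^k \le N^{-\Omega_k(1/d^{O_k(1)})}$, so the $\beta_im^k$ contribution already has acceptably small oscillation on all of $\mc{A}_a$. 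Apply the induction hypothesis to $\{\wt{R}_{i,a}\}_{i=1}^d$ on $[\ell_a]$: this yields a partition of $[\ell_a]$ into APs of length $\ge \ell_a^{1/d^{O_{k-1}(1)}}\ge N^{\delta/d^{O_{k-1}(1)}} = N^{1/d^{O_k(1)}}$, on which each $\wt{R}_{i,a}$ varies by at most $2\ell_a^{-\Omega_{k-1}(1/d^{O_{k-1}(1)})}$. Pulling these back through $m\mapsto a+qm$ transports them into genuine APs inside $\mc{A}_a\subseteq[N]$; the triangle inequality then combines the $\beta_im^k$ and $\wt{R}_{i,a}$ bounds to yield oscillation at most $2N^{-\Omega_k(1/d^{O_k(1)})}$ on each piece. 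Taking the union over $a\in\{1,\ldots,q\}$ gives the desired decomposition, and counting the pieces shows $N/L\ge N^{1/d^{O_k(1)}}/2$.

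The main work is bookkeeping: one must verify that the exponent of $d$ remains polynomial through all $k$ levels of the induction. Proposition~\ref{prop:schmidt} contributes a factor of $1/d^2$ at each step, and the inductive step multiplies the denominator $d^{O_{k-1}(1)}$ by another constant, so after $k$ iterations one still obtains $1/d^{O_k(1)}$. I do not foresee any deeper obstacle; the argument is a clean ``peel off the top degree'' induction, and the crucial quantitative choice is the Schmidt window $N^{1-\delta}$ with $\delta = \Theta_k(1/d^2)$, which simultaneously ensures (i) the residual AP length $N^\delta$ is large enough to recurse on, and (ii) the leading-term oscillation $|\beta_i|\ell_a^k$ is polynomially small in $N$.
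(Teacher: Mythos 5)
Your overall strategy — peel off the leading coefficient via Schmidt, factor out the integer part, recurse on the degree-$(k-1)$ remainder — is the same as the paper's, and your bookkeeping of the exponent $1/d^{O_k(1)}$ across the $k$ levels is also the right idea. However, there is a genuine gap in how you control the leading term.

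You take $\mc{A}_a = \{a,a+q,a+2q,\ldots\}\cap[N]$ to be the \emph{full} residue class mod $q$, so $\ell_a \approx N/q$. Schmidt only guarantees $q \le N^{1-\delta}$; it gives no lower bound on $q$. If $q$ happens to be small (say $q=1$), then $\ell_a\approx N$, and the claim ``$|\beta_i|\ell_a^k\le N^{-\Omega_k(1/d^{O_k(1)})}$'' fails badly: with $|\beta_i|\ll dN^{-(1-\delta)c_k/d^2}$ and $\ell_a^k\approx N^k$, the product $|\beta_i|\ell_a^k$ is \emph{polynomially large} in $N$. The lower bound $\ell_a\ge N^\delta/2$ you record is irrelevant here — what you need is an \emph{upper} bound on $\ell_a$, and none is available from $q\le N^{1-\delta}$ alone. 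The paper avoids this by not using the full residue classes: after finding $D$, it cuts $[N]$ into APs of common difference $D$ but of length only $\asymp \tau^{-1/(2k)}$ (where $\tau$ is the Schmidt saving), so that the leading-term oscillation is bounded by $\asymp\tau^{-1/2}\cdot\tau=\tau^{1/2}$ regardless of how small $D$ is. Your argument can be repaired by the same device — after partitioning by residue mod $q$, further chop each $\mc{A}_a$ into sub-progressions of a fixed length $\asymp N^\delta$ (or $\asymp |\beta|^{-1/(2k)}$) — but as written the inductive step does not close. A minor additional point: the paper also disposes of the regime $N\le\exp(d^{\Omega_k(1)})$ up front by breaking into singletons, which you omit, though that is cosmetic compared to the main issue.
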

\begin{proof}
We proceed by induction on $k$. The case $k = 0$ is trivial as $Q_j(\cdot)$ are constant. Furthermore we may assume that $N\ge\exp(d^{\Omega_k(1)})$ else we may break $[N]$ into singleton arithmetic progressions.

Let $Q_j(n) = \sum_{\ell=0}^{k}\alpha_{j,\ell}n^{\ell}$. Applying Proposition~\ref{prop:schmidt}, there exists $D\le N^{1/2}$ such that 
\[\max_{1\le j\le d}\snorm{\alpha_{j,k} D^k}_{\mb{R}/\mb{Z}}\ll_k dN^{-c_k/(2d^2)} =: \tau.\]
We break $[N]$ into arithmetic progressions of common difference $D$ and with lengths between $2^{-1}\tau^{-1/(2k)}$ and $\tau^{-1/(2k)}$. Label these progressions $\mc{R}_1,\ldots,\mc{R}_{L'}$ with starting points $s_{i}$ for $1\le i\le L'$. We have 
\[Q_j(Dn+s_i) = \alpha_{j,k}D^{k}n^{k} + Q_{j,i}(n)\]
for appropriately defined polynomials $Q_{j,i}(n)$ of degree at most $k-1$. Note that for $n,n'\in [\tau^{-1/(2k)}]$, we have 
\begin{align*}
\snorm{Q_j(Dn+s_i) - Q_j(Dn' + s_i)}_{\mb{R}/\mb{Z}} &= \snorm{\alpha_{j,k}D^{k}(n^{k}-(n')^k) + Q_{j,i}(n) - Q_{j,i}(n')}_{\mb{R}/\mb{Z}}\\
&\le 2\tau^{-1/2} \cdot \snorm{\alpha_{j,k}D^{k}}_{\mb{R}/\mb{Z}} + \snorm{Q_{j,i}(n) - Q_{j,i}(n')}_{\mb{R}/\mb{Z}}\\
&\le 2 \tau^{1/2} + \snorm{Q_{j,i}(n) - Q_{j,i}(n')}_{\mb{R}/\mb{Z}}.
\end{align*}
The result now follows by induction applied to each $Q_{j,i}(n)$ for $1\le i\le L'$ on the interval $[\tau^{-1/(2k)}]$ and using these decompositions to split the $\mc{R}_i$ into our final decomposition. Letting $N'=\tau^{-1/(2k)}$, the number of arithmetic progressions resulting is bounded by
\[(2N/N')\cdot 2(N')^{1-c_1/d^{c_2}}\le 2N^{1-\Omega_k(1/d^{O_k(1)})},\]
where $c_1,c_2$ are the implicit constants for the inductive hypothesis $k-1$. The result follows.
\end{proof}

We next require the following lemma controlling coefficients of polynomials which live in a restricted range $\imod{1}$. It will be convenient to recall the smoothness norm of a polynomial $P(n) = \sum_{i=0}^{k}\alpha_i \binom{n}{i}$ which is defined as 
\[\snorm{P}_{C^{\infty}[N]} := \max_{1\le i\le k} N^i\snorm{\alpha_i}_{\mb{R}/\mb{Z}}.\]
\begin{lemma}\label{lem:small+int}
Fix an integer $k\ge 1$. There exists $c_k>0$ such that if $\eps\in (0,c_k)$ and $N\ge c_k^{-1}$ then the following holds. Consider a polynomial $P(n) = \sum_{i=0}^{k}\alpha_i \binom{n}{i}$. Suppose that for $n,n'\in [N]$, we have $\snorm{P(n)-P(n')}_{\mb{R}/\mb{Z}}\le \eps$. Then
\[\snorm{P}_{C^{\infty}[N]}\ll_{k} \eps.\]
\end{lemma}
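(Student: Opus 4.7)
The plan is to induct on $k$, pinning down the leading coefficient $\alpha_k$ first and then reducing to degree $k-1$. The argument rests on a one-variable sub-claim: if $\snorm{h^k\alpha}_{\mb{R}/\mb{Z}}\le \delta$ for all $h\in [M]$ with $\delta<c_k'$ for some small constant $c_k'>0$, then $\snorm{\alpha}_{\mb{R}/\mb{Z}}\le \delta/M^k$. To prove the sub-claim, shift $\alpha$ by an integer so that $|\alpha|\le \delta$ (using $h=1$) and induct on $h\ge 2$: assuming $|\alpha|\le \delta/(h-1)^k$, any integer $n_h$ with $|h^k\alpha-n_h|\le \delta$ satisfies $|n_h|\le h^k\delta/(h-1)^k+\delta\le (2^k+1)\delta$, which is strictly less than $1$ provided $\delta<1/(2^k+1)$. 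Thus $n_h=0$, giving $|\alpha|\le \delta/h^k$ and closing the induction.

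To apply the sub-claim to $\alpha_k$ I would take the $k$-th forward difference $\Delta_h^k P(n)$ at variable step $h$. Since $\binom{n}{k}=n^k/k!+(\text{lower})$ in the monomial basis and $\Delta_h^k$ annihilates polynomials of degree less than $k$, one computes $\Delta_h^k P(n)=h^k\alpha_k$ identically in $n$ and $h$. When $h\le (N-1)/k$, all $k+1$ evaluation points $1,1+h,\ldots,1+kh$ lie in $[N]$, and expanding $\Delta_h^k P(1)=\sum_{j=0}^k(-1)^{k-j}\binom{k}{j}P(1+jh)$ together with $\sum_j(-1)^{k-j}\binom{k}{j}=0$ and the hypothesis yields $\snorm{h^k\alpha_k}_{\mb{R}/\mb{Z}}\le 2^k\eps$. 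Applying the sub-claim with $\delta=2^k\eps$ and $M=\lfloor(N-1)/k\rfloor$ (which is valid once $\eps<c_k$ for $c_k$ sufficiently small) yields $N^k\snorm{\alpha_k}_{\mb{R}/\mb{Z}}\ll_k\eps$; the base case $k=1$ is just the sub-claim itself.

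For the inductive reduction, pick an integer $m$ with $|\alpha_k-m|=\snorm{\alpha_k}_{\mb{R}/\mb{Z}}$ and set $P'(n)=P(n)-(\alpha_k-m)\binom{n}{k}$. Then $|P(n)-P'(n)|\le \snorm{\alpha_k}_{\mb{R}/\mb{Z}}\cdot\binom{N}{k}\ll_k\eps$, so $P'$ obeys the hypothesis of the lemma with $\eps$ replaced by $O_k(\eps)$; and since $m\binom{n}{k}\in\mb{Z}$ for $n\in\mb{Z}_{\ge 0}$, $P'\equiv \sum_{i<k}\alpha_i\binom{n}{i}\imod{1}$, a polynomial of degree $k-1$ in the binomial basis with the same lower coefficients as $P$. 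The inductive hypothesis applied to $P'$ then yields $N^i\snorm{\alpha_i}_{\mb{R}/\mb{Z}}\ll_k\eps$ for $i<k$, completing the proof. The main technical point is the sub-claim; the smallness of $\delta$ is essential there to prevent $h^k\alpha$ from wrapping to a nonzero integer as $h$ grows, which would destroy the iteration and prevent extracting the full $1/M^k$ saving.
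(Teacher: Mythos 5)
Your proof is correct, and at the crucial step it takes a genuinely different (and more elementary) route than the paper. Both arguments share the finite-difference identity $h^k\alpha_k=\sum_{j=0}^k(-1)^{k-j}\binom{k}{j}P(1+jh)$ and the same induction on the degree via subtracting the near-integer leading coefficient; the difference is in how one converts $\snorm{h^k\alpha_k}_{\mb{R}/\mb{Z}}\le 2^k\eps$ into $\snorm{\alpha_k}_{\mb{R}/\mb{Z}}\ll_k\eps N^{-k}$. The paper first observes $|\sum_{n\le N}e(P(n))|\ge N/2$ and invokes a quantitative Weyl inequality to produce $q\ll_k1$ with $\snorm{qP}_{C^\infty[N]}\ll_k1$, then applies the difference identity at a single prime step $t\asymp N$ and combines the two pieces of Diophantine information to rule out $\alpha_k$ sitting near a nonzero rational with small denominator. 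You instead apply the identity at every step $h\le (N-1)/k$ and run a no-wrap-around bootstrap in $h$: once $\alpha_k$ is within $2^k\eps$ of an integer (the case $h=1$), the smallness of $\eps$ forces the nearest integer to $h^k\alpha_k$ to remain $0$ as $h$ grows, giving the full $M^{-k}$ saving; this sub-claim and its proof are sound, and the hypothesis of the lemma (all pairwise differences small) is exactly what licenses using all steps $h$. What each approach buys: yours is self-contained, avoiding the Weyl black box and the prime-step trick entirely, at the cost of using the hypothesis at all step sizes; the paper's version isolates the rational-obstruction analysis into a citable exponential-sum estimate, which is a pattern that generalizes to weaker hypotheses (e.g.\ correlation rather than uniform closeness), but for this particular statement your elementary argument suffices and is arguably cleaner. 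The remaining bookkeeping in your write-up (choice of $c_k$ small in terms of $c_{k-1}$ and the $O_k(\eps)$ loss in the degree-reduction step, and the requirement $N\ge c_k^{-1}$ so that $M\ge1$) is handled correctly.
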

\begin{proof}
Note that
\[\bigg|\sum_{n=1}^N e(P(n))\bigg| \ge N/2.\]
By a quantitative version of Weyl's inequality, which may be found in Green and Tao \cite[Proposition~4.3]{GT12}, there exists $q\in \mb{N}$ with $q\ll_k 1$ such that 
\[\snorm{qP}_{C^{\infty}[N]}\ll_{k} 1.\]
Let $1\le t \le \lfloor N/(2k)\rfloor$ be an integer and note that 
\[\alpha_k \cdot t^{k} = \sum_{i=0}^{k}(-1)^{k-i}\binom{k}{i}\cdot P(t\cdot i + 1).\]
Via the triangle inequality, we therefore have 
\[\snorm{\alpha_k \cdot t^{k}}_{\mb{R}/\mb{Z}}\le 2^{k-1}\eps.\]
Take $t$ to be a prime between $\lfloor N/(2C)\rfloor$ and $\lfloor N/C\rfloor$ where $C$ is a sufficiently large absolute constant in terms of $k$. Combining this with the estimate $\snorm{q\alpha_k}_{\mb{R}/\mb{Z}}\ll_{k} N^{-k}$ implies that $\snorm{\alpha_k}_{\mb{R}/\mb{Z}}\ll_{k} \eps \cdot N^{-k}$. The result then follows by induction on $k$ and applying the result for the degree $(k-1)$ polynomial $P'(n) = \sum_{i=0}^{k-1}\alpha_i\binom{n}{i}$.
\end{proof}

With this we are in position to deduce the result for nilsequences along the lines sketched in Section~\ref{sub:iterative-schmidt}.

\begin{proof}[Proof of Lemma~\ref{lem:schmidt-nilman}]
Consider the degree $k$ filtration of the group $G_i$, $G_{i,0} = G_{i,1} \geqslant G_{i,2} \geqslant \cdots \geqslant G_{i,k}\geqslant\mr{Id}_{G_i}$. We say the group $G_i$ has a degree $k$ filtration of type $t$ if $G_{i,t} = G_i$ (i.e., the first $(t+1)$ groups in the filtration match). We prove the result by backwards induction on $t$ assuming that all groups $G_i$ have degree $k$ filtrations of type $t$; note that the result is trivial when $t = k+1$ and we aim to prove the claim when $t = 1$. So, consider the case where the filtration has type $t$ for some $1\le t\le k$ and suppose that we already know cases of larger type.

Let $\mc{X}_{i} = \{X_{i,1},\ldots,X_{i,\dim(G_i)}\}$ denote the Mal'cev basis for $G_i$. By the classification of polynomial sequences (see \cite[Lemma~6.7]{GT10b}), we have 
\[g_i(n) = \exp\bigg(\sum_{j=1}^{\dim(G_i)}P_{i,j}(n) \cdot X_{i,j}\bigg)\]
where if $X_{i,j}\in (\mc{X}_i\cap \log(G_{i,\ell}))\setminus (\mc{X}_i\cap \log(G_{i,\ell+1}))$ then the degree of polynomial $P_{i,j}(n)$ is bounded by $\ell$. 

We consider the polynomials $P_{i,j}(n)$ for $1\le i\le T$ and $1\le j \le \dim(G_i) - \dim(G_{i,t+1})$. The degrees of $P_{i,j}(n)$ are all at most $t\le k$ and the total number of polynomials number consideration is bounded by $T\cdot d$. By applying Lemma~\ref{lem:schmidt-gen}, there exists a decomposition of $[N]$ into arithmetic progressions $\mc{P}_{1},\ldots,\mc{P}_L$ such that:
\begin{itemize}
    \item $N/L\ge N^{\Omega_k(1/(dT)^{O_k(1)})}/2$
    \item We have
    \[\max_{\substack{1\le i\le T\\1\le j \le \dim(G_i) - \dim(G_{i,t+1})}}\max_{1\le s\le L}\max_{n,n'\in\mc{P}_s}\snorm{P_{i,j}(n) - P_{i,j}(n')}_{\mb{R}/\mb{Z}}\le 2\cdot N^{-\Omega_k(1/(dT)^{O_k(1)})}.\]
\end{itemize}

We break the progressions $\mc{P}_s$ into two classes: the first class ($s\in\mc{S}$) if the progression has length bounded by $\sqrt{N/L}$ and the second class ($s\in\mc{L}$) otherwise. For progressions which are short, we break each such progression into singletons; after this there are at most $L + \sqrt{N/L} \cdot L \le 2\sqrt{NL}$ progressions which is qualitatively identical to before. For each $s\in\mc{L}$, we write $\mc{P}_s = \{a_sn + b_s\}_{n\in[|\mc{P}_s|]}$ where $|\mc{P}_s|$ denotes the length of the progression. 

Using the second condition above and applying Lemma~\ref{lem:small+int}, we see that for each long progression $\mc{P}_s$, we have for all $i,j$ that
\[P_{i,j}(a_sn + b_s) = P_{i,j,s,\mr{small}}(n) +  P_{i,j,s,\mr{int}}(n)\]
where:
\begin{itemize}
    \item $\deg(P_{i,j,s,\mr{int}}),\deg(P_{i,j,s,\mr{small}})\le\deg(P_{i,j})$
    \item $P_{i,j,s,\mr{int}}$ maps $\mb{Z}\to\mb{Z}$
    \item If $P_{i,j,s,\mr{small}}(n) = \sum_{r=0}^{t}\alpha_{i,j,s,\mr{small},r}\binom{n}{r}$ then 
    \[|\alpha_{i,j,s,\mr{small},r}|\le 2 N^{-r} \cdot N^{-\Omega_k(1/(dT)^{O_k(1)})}\]
    for $1\le r\le t$ and $|\alpha_{i,j,s,\mr{small},0}|\le 1$.
\end{itemize}
We have implicitly used $|\mc{P}_s|\ge\sqrt{N/L}\ge N^{\Omega_k(1/(dT)^{O_k(1)}})$ for $s\in\mc{L}$ here.

The key trick is to now ``reduce'' the polynomial sequence $g_i$ to one which lives in $G_{i,t+1}$. Define
\begin{itemize}
    \item $\eps_{i,s}(n) = \exp\Big(\sum_{j=1}^{\dim(G_i) - \dim(G_{i,t+1})}P_{i,j,s,\mr{small}}(n) \cdot X_{i,j}\Big)$
    \item $\gamma_{i,s}(n) = \prod_{j=1}^{\dim(G_i) - \dim(G_{i,t+1})}\exp(X_{i,j})^{P_{i,j,s,\mr{int}}(n)}$
    \item $g_{i,s}'(n) = \eps_{i,s}(n)^{-1} \cdot g(a_s n + b_s) \cdot \gamma_{i,s}(n)^{-1}$ 
\end{itemize}
Note that $\eps_{i,s}$, $\gamma_{i,s}$ are polynomial sequences with respect to the filtration given on $G_i$ by the classification of polynomial sequences (see \cite[Lemma~6.7]{GT10b}) and the fact that the set of polynomial sequences form a group. Therefore $g_{i,s}'$ is also seen to be a polynomial sequence. The crucial point, however, is that by the Baker--Campbell--Hausdorff formula, we have that $g_{i,s}'$ only takes on values in $G_{i,t+1}$. (We are using the assumption on type that $G_i=G_{0,i}=G_{t,i}$, so any commutator is in $G_{2t,i}\leqslant G_{t+1,i}$ since $t\ge 1$.) 

Therefore we may inductively apply the claim for each long progression $\mc{P}_s$, to the polynomials $g_{i,s}'$ on $G_{i,t+1}$ where we take the filtration on $G_i$ intersected with $G_{i,t+1}$ (note that the filtration is still degree $k$). The corresponding Mal'cev basis is given by taking the last $\dim(G_{i,t+1})$ elements of $\mc{X}_i$. By induction therefore we may break each long $\mc{P}_s$ into $L_s$ such progressions $\mc{P}_{s,r}$ where $L_s\le |\mc{P}_s|^{1-\Omega_k(1/(Td)^{O_k(1)})}$ and such that 
\[\max_{\substack{s\in\mc{L}\\1\le r\le L_s}}\max_{n,n'\in\mc{P}_{s,r}}d_{G_i/\Gamma_i}(g_{i,s}'(n)\Gamma_i,g_{i,s}'(n')\Gamma_i)\le M^{O_k(d^{O_k(1)})}\cdot N^{-\Omega_k(1/d^{O_k(1)})}.\]
Here we are using \cite[Lemma~B.9]{Len23b} to compare distances between $G_i$ and $G_{i,t+1}$. 

Furthermore note that $\gamma_{i,s}$ takes values only in $\Gamma$ by the definition of a Mal'cev basis and that for $n,n'\in [|\mc{P}_s|]$ we have 
\[d_{G_i}(\eps_{i,s}(n),\mr{id}_{G_i})\le M^{O_k(d^{O_k(1)})}\text{ and } d_{G_i}(\eps_{i,s}(n),\eps_{i,s}(n'))\le M^{O_k(d^{O_k(1)})} \cdot N^{-\Omega_k(1/(dT)^{O_k(1)})}.\]
This is due to our bounds on the smoothness norm of $P_{i,j,s,\mr{small}}$ and \cite[Lemma~B.3]{Len23b}.

It therefore follows by \cite[Lemma~B.4]{Len23b} that for any $s,r$ we have
\begin{align*}
\max_{n,n'\in\mc{P}_{s,r}}&d_{G_i/\Gamma_i}(g_i(a_sn + b_s)\Gamma_i,g_i(a_sn' + b_s)\Gamma_i)\\
&=\max_{n,n'\in \mc{P}_{s,r}}d_{G_i/\Gamma_i}(\eps_{i,s}(n) g_{i,s}'(n) \Gamma_i,\eps_{i,s}(n') g_{i,s}'(n') \Gamma_i)\\
&\le\max_{n,n'\in \mc{P}_{s,r}}d_{G_i/\Gamma_i}(\eps_{i,s}(n) g_{i,s}'(n) \Gamma_i,\eps_{i,s}(n) g_{i,s}'(n') \Gamma_i)\\
&\qquad\qquad+ \max_{n,n'\in \mc{P}_{s,r}}d_{G_i/\Gamma_i}(\eps_{i,s}(n) g_{i,s}'(n') \Gamma_i,\eps_{i,s}(n') g_{i,s}'(n') \Gamma_i) \\
&\le M^{O_k(d^{O_k(1)})}\big(\max_{n,n'\in \mc{P}_{s,r}}d_{G_i/\Gamma_i}(g_{i,s}'(n) \Gamma_i, g_{i,s}'(n') \Gamma_i) + \max_{n,n'\in \mc{P}_{s,r}}d_{G_i}(\eps_{i,s}(n),\eps_{i,s}(n'))\big) \\
&\le M^{O_k(d^{O_k(1)})} \cdot N^{-\Omega_k(1/(dT)^{O_k(1)})}
\end{align*}
which completes the inductive step (our final decomposition is composed of all elements of the short $\mc{P}_s$ indexed by $s\in\mc{S}$ and all $\mc{P}_{s,r}$ arising from the long progressions indexed by $s\in\mc{L}$). We are done, noting that the number of inductive steps (hence the decay in parameters) is bounded in terms of $k$.
\end{proof}

\section{Completing the proof}\label{sec:proof}
We are now run the Heath-Brown \cite{Hea87} and Szemer\'{e}di \cite{Sze90} density increment strategy as reformulated by Green and Tao \cite{GT09}. In the first subsection we recall a number of preliminaries for the proof and in the second subsection we prove Theorem~\ref{thm:main}. Our treatment at this point is quite close to that of \cite{GT09} and we borrow certain elements from the density increment portion of \cite{PP22} as well.

\subsection{Preliminaries for density increment}\label{sub:preliminaries}
We first recall the definition of the Gowers $U^s$-norm over the integers. 

\begin{definition}\label{def:gowers-norm}
Given $f\colon\mb{Z}/N\mb{Z}\to\mb{C}$ and $s\ge 1$, we define
\[\snorm{f}_{U^s(\mb{Z}/N\mb{Z})}^{2^s}=\mb{E}_{x,h_1,\ldots,h_s\in\mb{Z}/N\mb{Z}}\Delta_{h_1,\ldots,h_s}f(x)\]
where $\Delta_hf(x)=f(x)\ol{f(x+h)}$ is the multiplicative discrete derivative (extended to vectors $h$ in the natural way).

Given a natural number $N$ and a function $f\colon[N]\to \mb{C}$, we choose a number $\wt{N}\ge 2^{s}N$ and define $\wt{f}\colon\mb{Z}/\wt{N}\mb{Z}\to\mb{C}$ via $\wt{f}(x) = f(x)$ for $x\in[N]$ and $0$ otherwise. Then
\[\snorm{f}_{U^s[N]} := \snorm{\wt{f}}_{U^s(\mb{Z}/\wt{N}\mb{Z})}/\snorm{\mbm{1}_{[N]}}_{U^s(\mb{Z}/\wt{N}\mb{Z})}.\]
One can check that this definition does not depend on the choice of $\wt{N}$. This is well known to be a seminorm for $s\ge 1$ and a norm for $s\ge 2$.
\end{definition}

As mentioned, the main input for our result will be the following improved bound for the $U^s$-norm inverse theorem given as \cite[Theorem~1.2]{LSS24b}.
\begin{theorem}\label{thm:main-inv}
Fix $\delta\in (0,1/2)$. Suppose that $f\colon[N]\to\mb{C}$ is $1$-bounded and
\[\snorm{f}_{U^{s+1}[N]}\ge\delta.\]
Then there exists a nilmanifold $G/\Gamma$ of degree $s$, complexity at most $M$, and dimension at most $d$ as well as a function $F$ on $G/\Gamma$ which is at most $K$-Lipschitz such that
\[|\mb{E}_{n\in[N]}[f(n)\ol{F(g(n)\Gamma)}]|\ge\eps,\]
where we may take
\[d\le\log(1/\delta)^{O_s(1)}\emph{ and }\eps^{-1},K,M\le \exp(\log(1/\delta)^{O_s(1)}).\]    
\end{theorem}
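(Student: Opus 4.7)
The plan is to proceed by induction on $s$, following the broad blueprint of Gowers and Green--Tao--Ziegler with the quantitative refinements developed in the authors' previous work. The base case $s=1$ is the $U^2$ inverse theorem, which follows from straightforward Fourier analysis and gives polynomial bounds. For the inductive step, start from $\snorm{f}_{U^{s+1}[N]}\ge\delta$ and unfold one derivative in the Gowers norm: by Markov's inequality applied to the identity $\snorm{f}_{U^{s+1}}^{2^{s+1}} = \mb{E}_h \snorm{\Delta_h f}_{U^s}^{2^s}$, one obtains $\snorm{\Delta_h f}_{U^s[N]}\ge\delta^{O_s(1)}$ for a set $H\subseteq[-N,N]$ of density $\gg\delta^{O_s(1)}$. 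Applying the inductive hypothesis to each such $h$ produces a degree-$(s-1)$ nilmanifold $G_h/\Gamma_h$, polynomial sequence $g_h$, and Lipschitz $F_h$ --- each with quasipolynomial complexity and polylogarithmic dimension --- such that $\Delta_h f$ correlates with $F_h(g_h(n)\Gamma_h)$.

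The second step is to put the per-$h$ nilsequences on common footing: find a single degree-$(s-1)$ nilmanifold $G_\ast/\Gamma_\ast$ and, for most $h\in H$, a polynomial sequence $\wt{g}(h,n)$ valued in $G_\ast$ so that $\Delta_h f$ correlates with a fixed Lipschitz function of $\wt{g}(h,n)\Gamma_\ast$. Achieving this rigidification without losing quasipolynomial bounds on parameters or \emph{polynomial} bounds on dimension is the first serious hurdle. The natural tool is a quantitative arithmetic regularity / sunflower decomposition: one shows that the leaves of the sunflower can be absorbed into a common nilmanifold whose dimension grows only polynomially in the pieces. This polynomial-in-dimension behavior is the main point that distinguishes the approach of \cite{LSS24b} from earlier exponential-dimension arguments.

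Next, one exploits the symmetries of the Gowers norm. Since $\Delta_h \Delta_{h'} f$ is symmetric under $h\leftrightarrow h'$, the correlation with $\wt{g}(h,n)$ should be compatible with an approximate cocycle identity $\wt{g}(h_1,n+h_2)\wt{g}(h_2,n)\approx \wt{g}(h_1+h_2,n)$ modulo lower-order terms. Using Cauchy--Schwarz together with Green--Tao's quantitative Leibman equidistribution theorem on nilmanifolds to factor out horizontal characters, one upgrades this approximate cocycle to a genuine polynomial dependence of $\wt{g}$ on $h$, up to a smooth error and an integer error. This is the step I expect to be the main obstacle: controlling parameters through repeated applications of equidistribution while keeping dimension polynomial rather than exponential requires carefully choosing factorizations and sunflower refinements, and is the technical heart of \cite{LSS24b}.

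Finally, one performs an integration step to produce a degree-$s$ nilsequence correlating with $f$ itself. Having realized $\Delta_h f$ as correlating with a fixed Lipschitz function of $\wt{g}(h,n)\Gamma_\ast$, where $\wt{g}$ is polynomial in $(h,n)$ of appropriate multidegree, one extends the filtration so that the extra degree in $h$ lifts $\wt{g}$ to a polynomial sequence on an enlarged degree-$s$ nilmanifold $G/\Gamma$. A now-standard antiderivative / orthogonality argument (in the style of Green--Tao's work on $U^3$ and $U^4$) then converts the derivative correlation into a correlation of $f$ with a Lipschitz function on $G/\Gamma$. Throughout one tracks complexity and Lipschitz constant as $\exp(\log(1/\delta)^{O_s(1)})$ and dimension as $\log(1/\delta)^{O_s(1)}$, the polynomial-in-dimension bound being inherited directly from the rigidification step.
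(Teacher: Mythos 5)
This statement is not proved in the paper at all: it is quoted verbatim as the main result of \cite{LSS24b} (Theorem~1.2 there) and used as a black box, so the paper's ``proof'' is simply that citation. Your proposal instead attempts to sketch the proof of the external result, and as a proof it has a genuine gap --- in fact it is circular at exactly the decisive points. The two steps you single out, (i) placing the nilsequences obtained for the various $\Delta_h f$ on a single nilmanifold whose dimension stays polylogarithmic in $1/\delta$, and (ii) upgrading the approximate cocycle/linearity in $h$ via quantitative equidistribution without exponential losses in the dimension, are precisely where every previously known argument (Gowers-style local arguments, the Green--Tao--Ziegler induction) loses the quasipolynomial bounds claimed in the statement. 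You do not supply an argument for either; you explicitly defer them to ``the technical heart of \cite{LSS24b}.'' Since the content of the theorem \emph{is} that these steps can be carried out with $d\le\log(1/\delta)^{O_s(1)}$ and all other parameters $\le\exp(\log(1/\delta)^{O_s(1)})$, a sketch that assumes those steps proves nothing beyond restating the theorem.

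Two further cautions. First, the outline you give (straightforward induction on $s$, approximate cocycle identity, antiderivative/integration in the style of the $U^3$ and $U^4$ arguments) describes the classical Green--Tao--Ziegler blueprint more than the actual argument of \cite{LSS24b}, which is organized around multidegree/degree-rank inductions, sunflower-type refinements and efficient equidistribution precisely in order to avoid the dimension blow-up you wave at; so even as a roadmap it is not an accurate account of how the quantitative claim is obtained. Second, within the present paper nothing more than the citation is expected or appropriate: the theorem is an input, and the new content of this paper lies in Sections~\ref{sec:schmidt} and~\ref{sec:proof}, not in reproving the inverse theorem.
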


We now define the $k$-fold linear operator corresponding to counting $k$-term arithmetic progressions. Given functions $f_i\colon[N]\to\mb{C}$, define
\[\Lambda_k(f_1,\ldots,f_k) = \mb{E}_{x,y\in\{0,\ldots,N\}}\prod_{j=1}^{k}f_j(x + (j-1)y)\]
where $f_i$ are extended by $0$ outside of $[N]$. We also write 
\[\Lambda_k(f) :=\Lambda_k(f,\ldots,f).\] We have the following basic inequalities regarding the operator $\Lambda_k$. The proof is by now standard and hence is omitted (see \cite[Lemma~3.2]{GT09} and \cite[Theorem~3.2]{Gow01a}).
\begin{lemma}\label{lem:basic-fct}
Consider functions $f_i\colon[N]\to \mb{C}$ for $1\le i\le k$. Then we have
\begin{align*}
\Lambda_k(f_1,\ldots,f_{k})&\le \min_{1\le i\le k}\snorm{f_i}_{L^1[N]}\cdot \prod_{j\neq i}\snorm{f_j}_{L^\infty[N]},\\
\Lambda_k(f_1,\ldots,f_{k})&\ll_k \min_{1\le i\le k}\snorm{f_i}_{U^{k-1}[N]}\cdot\prod_{j\neq i}\snorm{f_j}_{L^\infty[N]}.
\end{align*}
\end{lemma}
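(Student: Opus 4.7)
The first inequality is elementary. Fix $i$ and bound $|f_j(x+(j-1)y)|\le\snorm{f_j}_{L^\infty[N]}$ for every $j\neq i$; what remains is
\[
\Lambda_k(f_1,\ldots,f_k)\le \Big(\prod_{j\neq i}\snorm{f_j}_{L^\infty[N]}\Big)\cdot\mb{E}_{x,y\in\{0,\ldots,N\}}|f_i(x+(i-1)y)|.
\]
Since $f_i$ is supported on $[N]$, I would hold $y$ fixed and use that $x\mapsto x+(i-1)y$ is injective on $\{0,\ldots,N\}$, which bounds the inner average by $\snorm{f_i}_{L^1[N]}$ (with the normalizing factor $1/(N+1)$). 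Taking the minimum over $i$ gives the claim.

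For the second (generalized von Neumann) inequality, the plan is to pass to a cyclic group and iterate Cauchy--Schwarz. I would pick $\wt{N}\ge 2kN$, extend each $f_j$ by zero to $\wt{f}_j\colon\mb{Z}/\wt{N}\mb{Z}\to\mb{C}$, and rewrite
\[
\Lambda_k(f_1,\ldots,f_k)=c_{N,\wt N}\cdot\mb{E}_{x,y\in\mb{Z}/\wt{N}\mb{Z}}\prod_{j=1}^{k}\wt{f}_j(x+(j-1)y),
\]
where $c_{N,\wt N}=\Theta_k(1)$ accounts for the change of ambient group (there is no wrap-around since $\wt N\ge 2kN$). By relabeling, one may reduce to the case $i=1$, i.e.\ one wishes to pull out $\snorm{\wt{f}_1}_{U^{k-1}}$.

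Next I would perform $k-1$ rounds of Cauchy--Schwarz in the style of Gowers \cite{Gow01a} and Green--Tao \cite{GT09}. At each step one averages over the remaining ``free'' variables, applies Cauchy--Schwarz to square the expression and eliminate one of the factors $\wt{f}_j$ for $j\ge 2$ (bounding it by $\snorm{f_j}_{L^\infty[N]}$), and introduces a new difference parameter via a linear change of variables in $y$. After $k-1$ iterations the surviving factor is a product of $2^{k-1}$ shifted copies of $\wt{f}_1$ indexed by sums of the difference parameters, i.e.
\[
\mb{E}_{x,h_1,\ldots,h_{k-1}}\Delta_{h_1,\ldots,h_{k-1}}\wt{f}_1(x)=\snorm{\wt{f}_1}_{U^{k-1}(\mb{Z}/\wt N\mb{Z})}^{2^{k-1}},
\]
while the other factors contribute at most $\prod_{j\ge 2}\snorm{f_j}_{L^\infty[N]}^{2^{k-1}}$.

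Finally I would take $2^{k-1}$-th roots, divide by $\snorm{\mbm{1}_{[N]}}_{U^{k-1}(\mb{Z}/\wt N\mb{Z})}$ (which is $\Theta_k(1)$ for $\wt N=\Theta_k(N)$), and use Definition~\ref{def:gowers-norm} to convert back to $\snorm{f_1}_{U^{k-1}[N]}$. Running the same argument with the roles of the $f_j$ permuted so that $f_i$ plays the role of $f_1$ gives the minimum over $i$. The only nontrivial bookkeeping is tracking these normalization constants through the Cauchy--Schwarz iteration; the Cauchy--Schwarz chain itself is entirely standard and introduces no new ideas.
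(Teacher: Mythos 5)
Your overall plan for both inequalities is the standard argument that the paper does not prove but cites, namely \cite[Theorem~3.2]{Gow01a} and \cite[Lemma~3.2]{GT09}; the first inequality is fine exactly as you state it.

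For the second inequality there is a genuine slip in the reduction to the cyclic group. You assert an equality
\[
\Lambda_k(f_1,\ldots,f_k)=c_{N,\wt N}\cdot\mb{E}_{x,y\in\mb{Z}/\wt N\mb{Z}}\prod_{j=1}^{k}\wt f_j(x+(j-1)y),\qquad c_{N,\wt N}=\Theta_k(1),
\]
justified only by the absence of wrap-around for $\wt N\ge 2kN$. But absence of wrap-around does not make the two sides proportional: the cyclic average also ranges over $y$ whose representative in $(-\wt N/2,\wt N/2]$ is negative, and those terms produce the \emph{reversed} progressions. Concretely,
\[
\mb{E}_{x,y\in\mb{Z}/\wt N\mb{Z}}\prod_{j=1}^{k}\wt f_j(x+(j-1)y)=\tfrac{(N+1)^2}{\wt N^2}\big(\Lambda_k(f_1,\ldots,f_k)+\Lambda_k(f_k,\ldots,f_1)\big)+O(\wt N^{-1}),
\]
and $\Lambda_k(f_1,\ldots,f_k)\neq\Lambda_k(f_k,\ldots,f_1)$ for generic $f_j$ (even real-valued), so the asserted equality fails. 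Taking absolute values of the $\wt f_j$ to majorize one sum by the other does not rescue this either, since the Cauchy--Schwarz chain would then deliver $\snorm{|f_i|}_{U^{k-1}}$ rather than $\snorm{f_i}_{U^{k-1}}$. The standard repair is to retain the $y$-constraint: letting $Y$ denote the image of $\{0,\ldots,N\}$ in $\mb{Z}/\wt N\mb{Z}$, one has the exact identity $\Lambda_k(f_1,\ldots,f_k)=\tfrac{\wt N^2}{(N+1)^2}\mb{E}_{x,y\in\mb{Z}/\wt N\mb{Z}}\mbm{1}_Y(y)\prod_j\wt f_j(x+(j-1)y)$ (this is where the no-wrap-around observation is actually used), and the extra weight $\mbm{1}_Y(y)$ is simply bounded by $1$ at the first stage of the Cauchy--Schwarz iteration that it is not the surviving factor. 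With that correction, the rest of your sketch---iterating Cauchy--Schwarz $k-1$ times, arriving at $\snorm{\wt f_i}_{U^{k-1}(\mb{Z}/\wt N\mb{Z})}$, and normalizing by $\snorm{\mbm{1}_{[N]}}_{U^{k-1}(\mb{Z}/\wt N\mb{Z})}=\Theta_k(1)$---is the standard proof and is sound.
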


We next define factors and the factor induced by function $g$ with a resolution $K$.
\begin{definition}\label{def:factor}
We define a \emph{factor} $\mc{B}$ of $[N]$ to be a partition $[N] = \bigsqcup_{B\in \mc{B}}B$. We define $\mc{B}(x)$ for $x\in[N]$ to be the part of $\mc{B}$ that contains $x$. We say $\mc{B}'$ refines $\mc{B}$ if every part of $\mc{B}$ can be written as a disjoint union of parts of $\mc{B}'$. We define a \emph{join} of a sequence of factors to be the partition (discarding empty parts)
\[\mc{B}_1\vee\cdots\vee\mc{B}_d := \{B_1\cap\cdots\cap B_d\colon B_i\in \mc{B}_i\}.\]
Next given a function $g\colon[N]\to\mb{R}$ and a resolution $K$, we define the factor induced by $g$ of resolution $K$ to be 
\[\mc{B}_{g,K} = \bigsqcup_{j\in\mb{Z}}\{x\in[N]\colon g(x)\in [j/K, (j+1)/K).\]
Finally, given a factor $\mc{B}$, we define $\Pi_{\mc{B}}f$ by 
\[\Pi_{\mc{B}}f(x) = \mb{E}_{y\in\mc{B}(x)}f(y).\]
\end{definition}

A technical annoyance is that one may potentially have a large set of points near the cutoffs when defining $\mc{B}_{g,K}$. We define a notion of regularity capturing when a function $g$ avoids such issues, which is related to an idea introduced by Bourgain \cite{Bou99} with regards to Bohr sets.
\begin{definition}\label{def:regular}
The factor $\mc{B}_{g,K}$ is $C$-regular if 
\[\sup_{r>0}\bigg(\frac{1}{2r}\frac{1}{N}\big|\{x\in [N]\colon\snorm{K\cdot  g(x)}_{\mb{R}/\mb{Z}}\le r\big\}|\bigg)\le C.\]
\end{definition}

It turns out to be easy to obtain ``regular'' factors; a useful trick (motivated by the proof of \cite[Corollary 2.3]{GT10b}) is to consider a random shift of $g$ and then apply the Hardy–-Littlewood maximal inequality. Given a function $g$ and resolution $K$, we define the maximal function 
\[M_{g,K}(t) := \sup_{r>0}\frac{1}{2r}\frac{1}{N}\big|\{x\in[N]\colon\snorm{K\cdot g(x) - t}_{\mb{R}/\mb{Z}}\le r\}\big|.\]
The Hardy--Littlewood maximal inequality (on the torus $\mb{R}/\mb{Z}$) implies that 
\[\mb{E}_{t\in [0,1]}[M_{g,K}(t)] = O(1).\]
Therefore we have the following elementary fact which will prove useful.

\begin{fact}\label{fct:maximal}
There exists a constant $C = C_{\ref{fct:maximal}}>0$ such that the following holds. Given a function $g\colon[N]\to\mb{R}$ and a resolution $K$, there exists a shift $t\in[0,1/K)$ such that $\mc{B}_{g-t,K}$ is $C$-regular.
\end{fact}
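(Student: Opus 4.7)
The plan is to observe that, for $t \in [0, 1/K)$, the condition ``$\mc{B}_{g-t,K}$ is $C$-regular'' is nothing more than a restatement of the pointwise bound $M_{g,K}(Kt) \le C$. Indeed, writing $s = Kt \in [0, 1)$ and using $\snorm{K(g(x) - t)}_{\mb{R}/\mb{Z}} = \snorm{Kg(x) - s}_{\mb{R}/\mb{Z}}$, the defining condition from \Cref{def:regular} reads
\[\sup_{r > 0} \frac{1}{2r} \cdot \frac{1}{N} \big|\{x \in [N] : \snorm{Kg(x) - s}_{\mb{R}/\mb{Z}} \le r\}\big| \le C,\]
which is exactly $M_{g,K}(s) \le C$. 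Thus the task reduces to producing some $s \in [0, 1)$ at which $M_{g,K}(s) = O(1)$ and then setting $t = s/K$.

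For this I would invoke the bound $\mb{E}_{t \in [0, 1]}[M_{g, K}(t)] = O(1)$ recorded just before the statement, which comes from the Hardy--Littlewood maximal inequality applied to the empirical probability measure $\mu = N^{-1} \sum_{x \in [N]} \delta_{Kg(x) \bmod 1}$ on $\mb{R}/\mb{Z}$ (whose associated maximal function is precisely $M_{g,K}$). By Markov's inequality, the exceptional set $\{s \in [0, 1) : M_{g, K}(s) > \lambda\}$ has Lebesgue measure at most $O(1)/\lambda$; choosing $\lambda = C_{\ref{fct:maximal}}$ to be a sufficiently large absolute constant makes this set of measure strictly less than $1$, and any $s$ in its complement furnishes a valid shift $t = s/K \in [0, 1/K)$.

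I do not anticipate any real obstacle: the claim is essentially one line of pigeonholing from the maximal inequality, once the regularity condition has been rewritten in terms of $M_{g,K}$. The one small technical point is that the literal $L^1$ norm of the maximal function of an atomic measure is infinite (as testing against a single Dirac mass shows), so the preceding assertion $\mb{E}_t[M_{g,K}(t)] = O(1)$ is really a weak-$(1,1)$ (Kolmogorov-type) estimate; this form nevertheless delivers the Markov conclusion above, which is all we need.
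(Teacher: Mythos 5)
Your proof is correct and is essentially the paper's intended argument: the fact is stated there as an immediate consequence of the maximal-function bound displayed just before it, exactly as you do after rewriting $C$-regularity of $\mc{B}_{g-t,K}$ as the pointwise bound $M_{g,K}(Kt)\le C$ and pigeonholing. Your technical caveat is also well taken: for the atomic measure $N^{-1}\sum_{x\in[N]}\delta_{Kg(x)\bmod 1}$ the centered maximal function fails to be integrable near the atoms, so the displayed assertion $\mb{E}_{t\in[0,1]}[M_{g,K}(t)]=O(1)$ should indeed be read as the weak-$(1,1)$ estimate $|\{t\in[0,1):M_{g,K}(t)>\lambda\}|\ll 1/\lambda$, which is precisely what your Markov/pigeonhole step uses and suffices for the fact.
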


\subsection{Constructing factor approximation and density increment}\label{sub:factor-increment}
The key claim which we need to prove Theorem~\ref{thm:main} is the following density increment lemma, phrased as a trichotomy.
\begin{lemma}\label{lem:dens-increm}
Fix an integer $k\ge 5$ and a constant $c>0$. Consider a function $f\colon[N]\to [0,1]$ such that $\mb{E}_{n\in[N]}f(n) = \delta$. There exist $c' = c'(c,k)$ and $C = C(c,k)$ such that one of the following always holds: 
\begin{itemize}
    \item $N\le \exp(\exp(\log(1/\delta)^{C}))$;
    \item $|\Lambda_k(f) - \Lambda_k(\delta \cdot \mbm{1}_{[N]})|\le c \delta^{k}$;
    \item There exists an arithmetic progression $\mc{P}\subseteq [N]$ of length at least $N^{1/\exp(\log(1/\delta)^{C})}$ such that 
    \[\mb{E}_{n\in\mc{P}}f(n)\ge(1+c')\delta.\]
\end{itemize}
\end{lemma}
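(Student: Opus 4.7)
The plan is to follow the density increment strategy of Heath-Brown and Szemer\'edi as reformulated by Green and Tao, invoking Theorem~\ref{thm:main-inv} (the quasipolynomial inverse theorem for $U^{k-1}$) and Lemma~\ref{lem:schmidt-nilman} (Schmidt decomposition for lists of nilsequences) as the key inputs. Assume we are in the nontrivial case, so that $|\Lambda_k(f) - \Lambda_k(\delta \mathbf{1}_{[N]})| > c \delta^k$; the goal is to produce a long AP on which $f$ has density at least $(1+c')\delta$.

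First, expand $f = \delta \mathbf{1}_{[N]} + (f - \delta \mathbf{1}_{[N]})$ inside $\Lambda_k$ and apply the generalized von Neumann bound (second line of Lemma~\ref{lem:basic-fct}) to each cross term to deduce $\|f - \delta \mathbf{1}_{[N]}\|_{U^{k-1}[N]} \gg_{c,k} \delta$. Theorem~\ref{thm:main-inv} with $s+1 = k-1$ then supplies a degree-$(k-2)$ nilmanifold of dimension $d \leq \log(1/\delta)^{O_k(1)}$, complexity and Lipschitz constant at most $\exp(\log(1/\delta)^{O_k(1)})$, and a nilsequence $F(g(n)\Gamma)$ correlating with $f - \delta \mathbf{1}_{[N]}$ at level $\varepsilon \geq \exp(-\log(1/\delta)^{O_k(1)})$.

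Next, I would inductively build a refining chain of factors $\{[N]\} = \mathcal{B}_0 \leq \mathcal{B}_1 \leq \cdots \leq \mathcal{B}_T$ whose atoms are arithmetic progressions. At stage $i$, if $\|f - \Pi_{\mathcal{B}_i} f\|_{U^{k-1}[N]}$ exceeds a suitable threshold, extract another nilsequence correlation from $f - \Pi_{\mathcal{B}_i} f$ via the inverse theorem, apply Lemma~\ref{lem:schmidt-nilman} once to the entire accumulated list of polynomial sequences, and refine $\mathcal{B}_i$ by the resulting progression decomposition (restricted to level sets of the accumulated Lipschitz functions); use Fact~\ref{fct:maximal} to shift the level set cutoffs so that the induced factor is $O(1)$-regular, ensuring few points lie near atom boundaries. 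Each iteration produces an energy increment $\|\Pi_{\mathcal{B}_{i+1}} f\|_{L^2}^2 - \|\Pi_{\mathcal{B}_i} f\|_{L^2}^2 \geq \Omega(\varepsilon^2)$ by the standard Pythagoras-style computation, so the process halts in $T \leq \varepsilon^{-2} \leq \exp(\log(1/\delta)^{O_k(1)})$ steps.

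At termination the uniform norm $\|f - \Pi_{\mathcal{B}_T} f\|_{U^{k-1}[N]}$ is small, so Lemma~\ref{lem:basic-fct} gives $\Lambda_k(f) \approx \Lambda_k(\Pi_{\mathcal{B}_T} f)$, so the count for the piecewise-constant function $\Pi_{\mathcal{B}_T} f = \sum_j \alpha_j \mathbf{1}_{B_j}$ is off from $\delta^k$ by $\Omega_k(c\delta^k)$. Since the atoms are APs that are long compared to $k$, only a negligible fraction of $k$-APs cross atom boundaries, so $\Lambda_k(\Pi_{\mathcal{B}_T} f) \approx \sum_j (|B_j|/N) \alpha_j^k$. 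Combining $\sum_j (|B_j|/N) \alpha_j = \delta$ with the power-mean bound $\alpha_j^k \leq (\max_j \alpha_j)^{k-1} \alpha_j$ yields $(\max_j \alpha_j)^{k-1} \geq (1 + \Omega(c)) \delta^{k-1}$, hence $\max_j \alpha_j \geq (1+c')\delta$ for some $c' = c'(c,k)$; the corresponding atom $B_j$ is the desired AP. The main obstacle is parameter bookkeeping: after $T$ iterations accumulating total nilsequence-dimension $Td \leq \exp(\log(1/\delta)^{O_k(1)})$, Lemma~\ref{lem:schmidt-nilman} returns atoms of length at least $N^{1/\exp(\log(1/\delta)^{O_k(1)})}$, which matches the AP length demanded in the third alternative precisely when $N \geq \exp(\exp(\log(1/\delta)^C))$, placing us otherwise in the first alternative. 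A secondary subtlety is ensuring that the increment is \emph{multiplicative} rather than merely additive, which hinges on the power-mean computation above together with regularity of the factor (controlling boundary effects on the AP count) rather than on a naive pigeonhole on one correlation.
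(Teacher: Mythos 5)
There is a genuine gap, and it sits at the final counting step. You claim that because the atoms $B_j$ of $\mathcal{B}_T$ are long arithmetic progressions, ``only a negligible fraction of $k$-APs cross atom boundaries,'' so that $\Lambda_k(\Pi_{\mathcal{B}_T}f)\approx\sum_j(|B_j|/N)\alpha_j^k$. This is false at the relevant scales: Lemma~\ref{lem:schmidt-nilman} only produces progressions of length about $N^{1/\exp(\log(1/\delta)^{O_k(1)})}=N^{o(1)}$, whereas in $\Lambda_k$ the common difference $y$ ranges up to $N$ and is typically of order $N$, so almost every progression counted by $\Lambda_k$ has its $k$ points in $k$ \emph{distinct} atoms; the progressions staying inside a single atom contribute a vanishing proportion of the count. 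Hence the power-mean/pigeonhole deduction that some atom has $\alpha_j\ge(1+c')\delta$ has no support, and this is exactly the step from which your multiplicative increment was supposed to come. A second structural problem is the interleaving of the Schmidt decomposition into the energy-increment iteration: the decomposition you get at stage $i+1$ (applied to the accumulated list) is not a refinement of the one at stage $i$, so the Pythagoras/energy argument as you state it does not apply; and if you instead take joins of the successive AP-partitions, no lower bound on atom length survives $T\le\exp(\log(1/\delta)^{O_k(1)})$ rounds. (A minor slip as well: telescoping in the first step gives $\snorm{f-\delta\mathbf{1}_{[N]}}_{U^{k-1}[N]}\gg_k c\delta^k$, not $\gg\delta$.)

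The missing idea is the Heath-Brown--Szemer\'edi truncation step, which decouples the two roles you merged. In the paper's argument the factor built by iterating Theorem~\ref{thm:main-inv} (Lemma~\ref{lem:factor-iterate}) has atoms that are \emph{level sets of the nilsequences}, not progressions, and no Schmidt decomposition is used during the iteration. The increment is first obtained not on an atom but on the super-level set $\Omega'=\{n\colon\Pi_{\mathcal{B}}f(n)>(1+c')\delta\}$: comparing $\Lambda_k(\Pi_{\mathcal{B}}f)$, $\Lambda_k(\delta\mathbf{1}_{[N]})$ and $\Lambda_k(g)$ for the truncation $g=\min(\Pi_{\mathcal{B}}f,(1+c')\delta)$, using only the $L^1\times L^\infty$ bound of Lemma~\ref{lem:basic-fct}, shows $\mathbb{P}_{n\in[N]}[n\in\Omega']\gg_{k}c\delta^k$, and by construction $f$ has average at least $(1+c')\delta$ on $\Omega'$ --- this is where the constant-factor multiplicative gain comes from, with no counting of within-atom progressions needed. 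Only then is Lemma~\ref{lem:schmidt-nilman} applied, once, to the $T$ polynomial sequences defining $\mathcal{B}$: since $\Omega'$ is $\mathcal{B}$-measurable and each $\mathcal{B}_{h_i,K}$ is $C$-regular (Fact~\ref{fct:maximal}), the progressions crossing the boundary of $\Omega'$ carry negligible mass, so most of $\Omega'$ is covered by long progressions entirely contained in it, and pigeonhole yields a single progression of the required length with density at least $(1+c'/2)\delta$. Your proposal cannot be repaired by tweaking the counting; it needs this reorganization.
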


We prove Theorem~\ref{thm:main} given Lemma~\ref{lem:dens-increm}; this is the standard density increment strategy.
\begin{proof}[Proof of Theorem~\ref{thm:main} given Lemma~\ref{lem:dens-increm}]
Suppose $A\subseteq [N]$ has no $k$-term arithmetic progressions. We iteratively increase the density of $A$; set $A = A_1$, $N = N_1$ and $\delta = \delta_1$ and we iteratively define $A_i\subseteq [N_i]$, and $\delta_i = |A_i|/N_i$.

If $N_i\le \exp(\exp(\log(1/\delta_i)^{C}))$, we immediately terminate. Otherwise, note that as $A_i$ is free of $k$-term arithmetic progressions, we have that 
\[|\Lambda_k(\mbm{1}_{A_i}) - \Lambda_k(\delta_{i}\cdot \mbm{1}_{ [N_i]})|\ge \delta_i^{k} \cdot |\Lambda_k(\mbm{1}_{[N_i]})| - |A_i|\cdot N_i^{-2} \gg_{k} \delta_i^{k}\]
where we have used that $N_i\ge \exp(\exp(\log(1/\delta_i)^{C}))\gg \delta_i^{-k}$. Therefore, the third case in Lemma~\ref{lem:dens-increm} occurs and there exists $\mc{P}_{i+1}$ such that
\[|A_i\cap \mc{P}_{i+1}|/|\mc{P}_{i+1}|\ge (1+c')\delta_i\]
and $|\mc{P}_{i+1}|\ge N_i^{1/\exp(\log(1/\delta_i)^{C})}$. We now rescale the arithmetic progression $\mc{P}_{i+1}$ to $[|\mc{P}_{i+1}|]=:[N_{i+1}]$, which sends $A_i\cap\mc{P}_{i+1}$ to a new set $A_{i+1}$, and then we continue the iteration.

Note that at every iteration the density $\delta_i$ increases by a multiplicative factor of at least $(1+c')$, so we must terminate in at most $O_k(\log(1/\delta))$ iterations. Thus there exists an index $j\le O_k(\log(1/\delta))$ such that 
\[N^{1/\exp(O_k(\log(1/\delta)^{C+1}))}\le N_j\le \exp(\exp(\log(1/\delta_j)^{C}))\le \exp(\exp(\log(1/\delta)^{C})).\]
This implies that 
\[\log N \le \exp(O_k(\log(1/\delta)^{O_k(1)}))\]
and thus 
\[\delta\le \exp(-(\log\log N)^{\Omega_k(1)}). \qedhere\]
\end{proof}

In order to prove Lemma~\ref{lem:dens-increm}, we first iterate Theorem~\ref{thm:main-inv} to obtain the following result. 
\begin{lemma}\label{lem:factor-iterate}
Fix a parameter $\eta\in(0,1/2)$ and $k\ge 5$. There exists a constant $C = C_{k}>0$ such that the following statement holds. If $N\ge \exp(\log(1/\eta)^{C})$ and $f\colon[N]\to\mb{R}$ is $1$-bounded then there exist functions $h_1,\ldots,h_T\colon[N]\to\mb{R}$ and $d,M,K\ge 1$ such that:
\begin{itemize}
    \item $\mc{B} = \bigvee_{1\le i\le T}\mc{B}_{h_i,K}$ satisfies $\snorm{f- \Pi_{\mc{B}}f}_{U^{k-1}[N]}\le\eta$;
    \item $T, M, K \le \exp(\log(1/\eta)^{C})$ and $d\le\log(1/\eta)^C$;
    \item $h_i = F_i(g_i(n)\Gamma_i)$ is a nilsequence where $g_i(n)$ takes values in a group $G_i$ which is given a degree $(k-2)$ filtration, $G_i/\Gamma_i$ has complexity bounded by $M$ and dimension bounded by $d$, and $F_i\colon G_i/\Gamma_i\to \mb{R}$ is $M$-Lipschitz;
    \item $\mc{B}_{h_i,K}$ is $C$-regular for $1\le i\le T$.
\end{itemize}
\end{lemma}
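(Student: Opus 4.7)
The plan is a standard energy-increment iteration of Theorem~\ref{thm:main-inv}, applied at the exponent $s = k-2$, which accounts for the degree $(k-2)$ filtration. I would fix at the outset a uniform resolution $K := \exp(\log(1/\eta)^{C'})$ for a sufficiently large $C' = C'(k)$, initialize $T = 0$ with the trivial factor $\mc{B} = \{[N]\}$ (so $\Pi_\mc{B} f \equiv \mb{E}_{[N]} f$), and iterate as follows. At stage $T$, if $\snorm{f - \Pi_\mc{B} f}_{U^{k-1}[N]} \ge \eta$, apply Theorem~\ref{thm:main-inv} to $(f - \Pi_\mc{B} f)/2$ with parameter $\eta/2$; this produces a (WLOG $1$-bounded) nilsequence $\psi(n) = F(g(n)\Gamma)$ on a degree $(k-2)$ nilmanifold of dimension at most $\log(1/\eta)^{O_k(1)}$ and complexity/Lipschitz constant at most $\exp(\log(1/\eta)^{O_k(1)})$, with
\[\big|\sang{f - \Pi_\mc{B} f,\ \psi}_{[N]}\big| \ge \eps := \exp(-\log(1/\eta)^{O_k(1)}).\]
Use Fact~\ref{fct:maximal} to pick $t \in [0,1/K)$ so that $\mc{B}_{\psi - t, K}$ is $C_{\ref{fct:maximal}}$-regular, set $h_{T+1} := \psi - t$, and refine $\mc{B} \mapsto \mc{B}' := \mc{B} \vee \mc{B}_{h_{T+1}, K}$.

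For the energy increment, observe that on each cell of $\mc{B}_{h_{T+1},K}$ the function $\psi$ oscillates by at most $1/K$, and hence the same is true on the finer partition $\mc{B}'$, so $\snorm{\Pi_{\mc{B}'} \psi - \psi}_\infty \le 1/K$. Choosing $C'$ so that $K \ge 4/\eps$, I obtain
\[\big|\sang{f - \Pi_\mc{B} f,\ \Pi_{\mc{B}'}\psi}\big| \ge \eps/2,\]
and, since $\Pi_{\mc{B}'}\psi$ is $\mc{B}'$-measurable and $\mc{B}'$ refines $\mc{B}$, the left-hand side equals $|\sang{\Pi_{\mc{B}'} f - \Pi_\mc{B} f,\ \Pi_{\mc{B}'}\psi}|$. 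Cauchy--Schwarz against $\snorm{\Pi_{\mc{B}'}\psi}_2 \le 2$ gives $\snorm{\Pi_{\mc{B}'} f - \Pi_\mc{B} f}_2 \gg \eps$, and the Pythagorean identity for nested conditional expectations forces $\snorm{\Pi_{\mc{B}'} f}_2^2 \ge \snorm{\Pi_\mc{B} f}_2^2 + \Omega(\eps^2)$. Since the energies lie in $[0,1]$, the loop terminates after $T \le O(\eps^{-2}) \le \exp(\log(1/\eta)^{O_k(1)})$ steps, at which point the first bullet of the lemma is forced by the termination condition.

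The only delicate point is that the resolution $K$ must be common to all the $\mc{B}_{h_i, K}$, so it has to be fixed a priori as a quasipolynomial upper bound on $1/\eps$ rather than chosen adaptively; this is why $K$ appears at the same quasipolynomial scale as the inverse-theorem parameters. The regularity condition is enforced cheaply at each step by a single application of Fact~\ref{fct:maximal}, which costs no increase in nilsequence complexity. Taking $C = C(k)$ larger than all the implicit constants that have appeared simultaneously yields the claimed bounds on $T, M, K, d$, and the hypothesis $N \ge \exp(\log(1/\eta)^C)$ absorbs the mild lower-bound requirements of Theorem~\ref{thm:main-inv} and the Hardy--Littlewood input underlying Fact~\ref{fct:maximal}.
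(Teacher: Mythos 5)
Your proposal is correct and takes essentially the same route as the paper's proof: iterate the inverse theorem at degree $k-2$, fix the resolution $K$ quasipolynomially in advance, shift by $t$ via Fact~\ref{fct:maximal} to enforce regularity, and extract an $L^2$ energy increment via self-adjointness of the projections and the Pythagorean identity, so the iteration halts after quasipolynomially many steps. The only cosmetic differences are that you use $\Pi_{\mc{B}'}\psi$ as the $\mc{B}'$-measurable approximant (exploiting that $\psi$ oscillates by at most $1/K$ on cells) where the paper uses the explicit discretization $\lfloor K(\psi+t)\rfloor/K$, and you normalize $(f-\Pi_{\mc{B}}f)/2$ and rescale $F$ to be $1$-bounded (one should also pass to $\Re F$ or $\Im F$ to get a real-valued $h_i$, as the paper does); these are interchangeable and cost only quasipolynomial factors.
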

\begin{proof}
The proof follows via applying Theorem~\ref{thm:main-inv} repeatedly. We begin the iteration by setting $\mc{B}_0 = [N]$ (i.e., the trivial partition). At each stage we will construct $h_{i+1}$ and then set $\mc{B}_{i+1} = \mc{B}_i \vee \mc{B}_{h_{i+1},K}$ with $K = \lceil\exp(\log(1/\eta)^{O_k(1)})\rceil$, where the implicit constant is chosen sufficiently large.

\noindent\textbf{Step 1:} If $\snorm{f-\Pi_{\mc{B}_i}f}_{U^{k-1}[N]}\le\eta$, we terminate.

\noindent\textbf{Step 2:} If $\snorm{f-\Pi_{\mc{B}_i}f}_{U^{k-1}[N]}>\eta$, by Theorem~\ref{thm:main-inv}, there exists a nilsequence $F_{i+1}(g_{i+1}(n)\Gamma_{i+1})$ such that 
\[\big|\mb{E}_{n\in[N]}[(f- \Pi_{\mc{B}_i}f)(n)\ol{F_{i+1}(g_{i+1}(n)\Gamma_{i+1}})]\big|\ge \exp(-\log(1/\eta)^{O_k(1)})\]
and where $G_{i+1}/\Gamma_{i+1}$ has complexity bounded by $\exp(\log(1/\eta)^{O_k(1)})$, dimension bounded by $\log(1/\eta)^{O_k(1)}$, $F_{i+1}\colon G_{i+1}/\Gamma_{i+1}\to\mb{C}$ is $\exp(\log(1/\eta)^{O_k(1)})$-Lipschitz, $G_{i+1}$ has been given a degree $(k-2)$ filtration, and where $g_{i+1}(n)$ is a polynomial sequence with respect to this filtration. Taking either the real or imaginary part of $F_{i+1}$, we may assume that $F_{i+1}\colon G_{i+1}/\Gamma_{i+1}\to\mb{R}$ and thus that 
\[\big|\mb{E}_{n\in[N]}[(f-\Pi_{\mc{B}_i}f)(n) F_{i+1}(g_{i+1}(n)\Gamma_{i+1})]\big|\ge\exp(-\log(1/\eta)^{O_k(1)}).\]
Note that for any $t\in[0,1/K)$, this implies that 
\begin{align*}
\bigg|\mb{E}_{n\in[N]}\bigg[(f- \Pi_{\mc{B}_i}f)(n) &\frac{\lfloor K(F_{i+1}(g_{i+1}(n)\Gamma_{i+1}) + t)\rfloor}{K}\bigg]\bigg| \\
&\ge \big|\mb{E}_{n\in[N]}[(f- \Pi_{\mc{B}_i}f)(n) F_{i+1}(g_{i+1}(n)\Gamma_{i+1})]\big| - 2/K\\
&\ge\exp(-\log(1/\eta)^{O_k(1)})
\end{align*}
given that the implicit constant defining $K$ is chosen sufficiently large. Recall here $\lfloor x\rfloor $ is defined in the standard manner that $x = \lfloor x\rfloor + \{x\}$ where $\lfloor x\rfloor \in \mb{Z}$ and $\{x\}\in [0,1)$. We then take $t\in [0,1/K)$, such that $\mc{B}_{F_{i+1}(g_{i+1}(n)\Gamma_{i+1}) + t,K}$ is $C$-regular; this exists for $C$ larger than an absolute constant by Fact~\ref{fct:maximal}. 

Set $h_{i+1}(n):=F_{i+1}(g_{i+1}(n)\Gamma_{i+1}) + t$. Note that
\[\frac{\lfloor K(F_{i+1}(g_{i+1}(n)\Gamma_{i+1}) + t)\rfloor}{K}\]
is measurable with respect to $\mc{B}_{h_{i+1},K}$ by construction and it is bounded by $\exp(\log(1/\eta)^{O_k(1)})$. Therefore since $\Pi_{\mc{B}_{h_{i+1},K}}$ is self-adjoint we have 
\begin{align*}
&\mb{E}_{n\in[N]}[|\Pi_{\mc{B}_{h_{i+1},K}}(f- \Pi_{\mc{B}_i}f)(n)|]\\
&\qquad\quad\ge(1+\snorm{F_{i+1}}_{L^\infty(G_{i+1}/\Gamma_{i+1})})^{-1}\cdot \bigg|\mb{E}_{n\in[N]}\bigg[(f- \Pi_{\mc{B}_i}f)(n)\frac{\lfloor K(F_{i+1}(g_{i+1}(n)\Gamma_{i+1}) + t)\rfloor}{K}\bigg]\bigg|\\
&\qquad\quad \ge \exp(-\log(1/\eta)^{O_k(1)}).
\end{align*}
    
\noindent\textbf{Step 3:} We now return back to Step 1 and keep on iterating this procedure until it terminates. This completes the proof modulo showing that the iteration terminates in a small number of steps. To show this, note that
\begin{align*}
\snorm{\Pi_{\mc{B}_{h_{i+1},K}}(f-\Pi_{\mc{B}_i}f)}_{L^1[N]}&\le\snorm{\Pi_{\mc{B}_{h_{i+1},K}}(f-\Pi_{\mc{B}_i}f)}_{L^2[N]}=\snorm{\Pi_{\mc{B}_{h_{i+1},K}}\Pi_{\mc{B}_{i+1}}(f-\Pi_{\mc{B}_i}f)}_{L^2[N]}\\
&\le\snorm{\Pi_{\mc{B}_{i+1}}(f-\Pi_{\mc{B}_i}f)}_{L^2[N]}=\snorm{\Pi_{\mc{B}_{i+1}}f-\Pi_{\mc{B}_i}f}_{L^2[N]}\\
&=(\snorm{\Pi_{\mc{B}_{i+1}}f}_{L^2[N]}^2-\snorm{\Pi_{\mc{B}_i}f}_{L^2[N]}^2)^{1/2}.
\end{align*}
The final equality is the Pythagorean theorem with respect to projections (this follows from e.g.~\cite[Lemma~4.3(iv)]{PP22}). We deduce
\[\snorm{\Pi_{\mc{B}_{i+1}}f}_{L^2[N]}^2-\snorm{\Pi_{\mc{B}_i}f}_{L^2[N]}^2\ge \exp(-\log(1/\eta)^{O_k(1)}).\]
Since for all $i$ we have $\snorm{\Pi_{\mc{B}_i}f}_{L^2[N]}\le \snorm{f}_{L^2[N]}\le 1$, there are at most $\exp(\log(1/\eta)^{O_k(1)})$ iterations as desired. 
\end{proof}

We now complete the proof of Lemma~\ref{lem:dens-increm} and therefore the proof of Theorem~\ref{thm:main}. The first part of the proof is finding a density increment on a factor derived from nilsequences, which is essentially identical to that of \cite[Lemma~5.8]{GT09}. In the second part, we apply our nilsequence Schmidt-type result Lemma~\ref{lem:schmidt-nilman} to find a long arithmetic progression with density increment.
\begin{proof}[Proof of Lemma~\ref{lem:dens-increm}]
Without loss of generality, we may assume that $c$ is smaller than an absolute constant. Furthermore we may assume that $N\ge \exp(\exp(\log(1/\delta)^{\Omega(1)}))$ (where the implicit constant may depend on $c,k$) and $|\Lambda_k(f) - \Lambda_k(\delta \cdot \mbm{1}_{[N]})|\ge c\delta^{k}$.

\noindent\textbf{Step 1: Increment on a factor.}
By applying Lemma~\ref{lem:factor-iterate}, there exists a factor $\mc{B}$ (derived from nilsequences of appropriate complexity, with parameters below) such that
\[\snorm{\Pi_{\mc{B}}f - f}_{U^{k-1}[N]}\le c^\ast\delta^k\]
where we choose $c^\ast$ sufficiently small in terms of $c$. Via telescoping and the second inequality in Lemma~\ref{lem:basic-fct}, we have 
\[|\Lambda_k(f) - \Lambda_k(\Pi_{\mc{B}}f)|\le c\delta^k/2\]
as long as $c^\ast$ was chosen appropriately, and therefore 
\[|\Lambda_k(\Pi_{\mc{B}}f) - \Lambda_k(\delta \cdot \mbm{1}_{[N]})|\ge c\delta^k/2.\]

Take $c' = \min(c,1)/(10k)^5$. Let $g = \min(\Pi_{\mc{B}}f, (1+c')\delta)$. The crucial claim is that if $\Omega'=\{n\in[N]\colon g(n)\neq\Pi_{\mc{B}}f(n)\} = \{n\in[N]\colon\Pi_{\mc{B}}f(n)>(1+c')\delta\}$ then $\Omega'$ must have sufficiently large measure. To see this note that:
\begin{align*}
|\Lambda_k(\Pi_{\mc{B}}f) - \Lambda_k(g)|&\le k\snorm{\Pi_{\mc{B}}f - g}_{L^1[N]} \le k \mb{P}_{n\in[N]}[n\in \Omega'],\\
|\Lambda_k(\delta\mbm{1}_{[N]}) - \Lambda_k(g)|&\le k(1+c')^{k-1}\delta^{k-1}\snorm{\delta\mbm{1}_{[N]}-g}_{L^1[N]},\\
\snorm{g - \delta \mbm{1}_{[N]}}_{L^1[N]}&\le \mb{P}_{n\in[N]}[n\in \Omega'] + \snorm{\delta \mbm{1}_{[N]} - \Pi_{\mc{B}}f}_{L^1[N]}.
\end{align*}
The first and second inequality follow from the first part of Lemma~\ref{lem:basic-fct} and telescoping while the final inequality follows from the triangle inequality. We simplify the inequalities slightly; as $\mb{E}_{n\in[N]}[\delta \mbm{1}_{[N]}] = \mb{E}_{n\in[N]}[f]=\mb{E}_{n\in[N]}[\Pi_{\mc{B}}f]$, we have 
\begin{align*}
\snorm{\delta \mbm{1}_{[N]} - \Pi_{\mc{B}}f}_{L^1[N]} &= 2\snorm{\max(\Pi_{\mc{B}}f - \delta \mbm{1}_{[N]},0)}_{L^1[N]}\le 2c'\delta + 2\mb{P}_{n\in[N]}[n\in \Omega'].
\end{align*}
Given this and using the upper bound on $c'$, we deduce 
\begin{align*}
|\Lambda_k(\Pi_{\mc{B}}f) - \Lambda_k(g)|&\le k\snorm{\Pi_{\mc{B}}f - g}_{L^1[N]} \le k \mb{P}_{n\in[N]}[n\in\Omega'],\\
|\Lambda_k(\delta\mbm{1}_{[N]}) - \Lambda_k(g)|&\le 2k\delta^{k-1}\snorm{\delta\mbm{1}_{[N]}-g}_{L^1[N]},\\
\snorm{g - \delta \mbm{1}_{[N]}}_{L^1[N]}&\le 3\mb{P}_{n\in[N]}[n\in \Omega'] + 2c'\delta.
\end{align*}
Therefore 
\begin{align*}
c\delta^{k}/2 &\le |\Lambda_k(\delta\cdot \mbm{1}_{[N]}) - \Lambda_k(\Pi_{\mc{B}}f)|\le |\Lambda_k(\delta\cdot \mbm{1}_{[N]}) - \Lambda_k(g)| + |\Lambda_k(\Pi_{\mc{B}}f) - \Lambda_k(g)| \\
&\le k \mb{P}_{n\in[N]}[n\in \Omega'] + 2k\delta^{k-1}\snorm{\delta \mbm{1}_{[N]} - g}_{L^{1}[N]}\le 7k\mb{P}_{n\in[N]}[n\in \Omega'] + 4kc'\delta^{k};
\end{align*}
thus we have $\mb{P}_{n\in[N]}[n\in \Omega']\ge c\delta^{k}/(20k)$.

\noindent\textbf{Step 2: Increment on a progression.}
We are now in position to apply the nilsequence Schmidt-type result Lemma~\ref{lem:schmidt-nilman}. Recall that we applied Lemma~\ref{lem:factor-iterate} to find $\mc{B}$, and hence we may write $\mc{B} = \bigvee_{1\le i\le T}\mc{B}_{h_i,K}$ where:
\begin{itemize}
    \item $T, M, K \le\exp(\log(1/\delta)^C)$ and $d\le\log(1/\delta)^{C}$;
    \item $h_i = F_i(g_i(n)\Gamma_i)$ is a nilsequence where $g_i(n)$ takes values in a group $G_i$ which is given a degree $(k-2)$ filtration, $G_i/\Gamma_i$ has complexity bounded by $M$ and dimension bounded by $d$, and $F_i\colon G_i/\Gamma_i\to\mb{R}$ is $M$-Lipschitz;
    \item $\mc{B}_{h_i,K}$ is $C$-regular for $1\le i\le T$.
\end{itemize}
Here $C$ is a slightly larger value than the constant $C_k$ in Lemma~\ref{lem:factor-iterate}, depending only on $k$.

We now apply Lemma~\ref{lem:schmidt-nilman} to $g_i(n)$ for $1\le i\le T$. We obtain a decomposition of $[N]$ into arithmetic progressions $\mc{P}_1,\ldots,\mc{P}_L$ such that
\begin{itemize}
    \item $N/L\ge N^{-1/\exp(\log(1/\delta)^{O_k(1)})}$;
    \item We have
    \[\max_{\substack{1\le i\le T\\1\le j\le L}}\max_{n,n'\in \mc{P}_j}d_{G_i/\Gamma_i}(g_i(n)\Gamma_i,g_i(n')\Gamma_i)\le \exp(\log(1/\delta)^{O_k(1)})\cdot N^{-1/\exp(\log(1/\delta)^{O_k(1)})}.\]
\end{itemize}

We now consider $\mc{P}_j$ which intersect $\Omega'$. Call a progression in the decomposition \emph{crossing} if it intersects $\Omega'$ and $[N]\setminus\Omega'$ and a progression \emph{contained} if it is fully within in $\Omega'$. Since $\Omega'$ is measurable in terms of $\mc{B}$, for a progression to be crossing it must ``cross a boundary'' defining $\mc{B}_{h_i,K}$ for at least one $1\le i\le T$. If a progression $\mc{P}_j$ crosses one of these boundaries defined by $h_i$ then all points in $\mc{P}_j$ map close to this boundary, since the function $F_i$ is $M$-Lipschitz. In particular, by regularity of each $\mc{B}_{h_i,K}$, the measure (with respect to the uniform distribution on $[N]$) of improper progressions is bounded by
\[\ll_k T\cdot\exp(\log(1/\delta)^{O_k(1)})\cdot N^{-1/\exp(\log(1/\delta)^{O_k(1)})} = \exp(\log(1/\delta)^{O_k(1)})\cdot N^{-1/\exp(\log(1/\delta)^{O_k(1)})}.\]

Let $\Omega^\ast$ denote the union of all the contained progressions which have length at least $N'=cc'\delta^{k + 1}/(400k) \cdot N/L$ (hence certainly $\Omega^\ast\subseteq\Omega'$). Let $\mc{I}$ be the set of all $1\le i\le L$ so that $\mc{P}_i$ either has length at most $N'$ or is crossing.  We easily see that 
\begin{align*}
0&\le\mb{P}_{n\in[N]}[n\in\Omega']-\mb{P}_{n\in[N]}[n\in\Omega^\ast]\le\sum_{i\in\mc{I}}\mb{P}_{n\in[N]}[n\in\mc{P}_i]\le cc'\delta^{k+1}/(200k);
\end{align*}
in the final inequality we have used that $N\ge\exp(\log(1/\delta)^{\Omega(1)})$ for a sufficiently large implicit constant.

Finally, this implies that
\begin{align*}
\mb{E}_{n\in \Omega^{\ast}}[f] &= \frac{\mb{E}_{n\in[N]}[f \cdot \mbm{1}_{n\in \Omega^{\ast}}]}{\mb{P}_{n\in[N]}[n\in \Omega^{\ast}]}\ge \frac{\mb{E}_{n\in[N]}[f \cdot \mbm{1}_{n\in \Omega'}] - cc'\delta^{k+1}/(200k)}{\mb{P}_{n\in[N]}[n\in \Omega']}\\
&\ge \frac{\mb{E}_{n\in[N]}[f\cdot \mbm{1}_{n\in \Omega'}]}{\mb{P}_{n\in[N]}[n\in \Omega']} - \frac{cc'\delta^{k+1}/(200k)}{c\delta^{k}/(20k)}\\
&= \frac{\mb{E}_{n\in[N]}[\Pi_{\mc{B}}f\cdot \mbm{1}_{n\in \Omega'}]}{\mb{P}_{n\in[N]}[n\in \Omega']} - c'\delta/10\\
&\ge (1+c')\delta - c'\delta/10\ge (1 + c'/2)\delta. 
\end{align*}
By pigeonhole, this implies that there exists a contained arithmetic progression $\mc{P}_i$ having length at least $cc'\delta^{k + 1}/(400k) \cdot N/L \ge N^{-1/\exp(\log(1/\delta)^{O_k(1)})}$ on which the density of $f$ is at least $(1+c'/2)\delta$. Adjusting the value of $c'$, this completes the proof.
\end{proof}

\bibliographystyle{amsplain1}
\bibliography{main.bib}

\providecommand{\bysame}{\leavevmode\hbox to3em{\hrulefill}\thinspace}
\providecommand{\MR}{\relax\ifhmode\unskip\space\fi MR }
\providecommand{\MRhref}[2]{%
  \href{http://www.ams.org/mathscinet-getitem?mr=#1}{#2}
}
\providecommand{\href}[2]{#2}
\begin{thebibliography}{10}

\bibitem{Blo16}
T.~F. Bloom, \emph{A quantitative improvement for {R}oth's theorem on
  arithmetic progressions}, J. Lond. Math. Soc. (2) \textbf{93} (2016),
  643--663.

\bibitem{BS20}
T.~F. Bloom and O.~Sisask, \emph{Breaking the logarithmic barrier in {R}oth's
  theorem on arithmetic progressions}, arXiv:2007.03528.

\bibitem{BS23}
T.~F. Bloom and O.~Sisask, \emph{An improvement to the {K}elley-{M}eka bounds
  on three-term arithmetic progressions}, arXiv:2309.02353.

\bibitem{Bou99}
J.~Bourgain, \emph{On triples in arithmetic progression}, Geom. Funct. Anal.
  \textbf{9} (1999), 968--984.

\bibitem{Bou08}
J.~Bourgain, \emph{Roth's theorem on progressions revisited}, J. Anal. Math.
  \textbf{104} (2008), 155--192.

\bibitem{Gow98}
W.~T. Gowers, \emph{A new proof of {S}zemer\'edi's theorem for arithmetic
  progressions of length four}, Geom. Funct. Anal. \textbf{8} (1998), 529--551.

\bibitem{Gow01}
W.~T. Gowers, \emph{Arithmetic progressions in sparse sets}, Current
  developments in mathematics, 2000, Int. Press, Somerville, MA, 2001,
  pp.~149--196.

\bibitem{Gow01a}
W.~T. Gowers, \emph{A new proof of {S}zemer\'{e}di's theorem}, Geom. Funct.
  Anal. \textbf{11} (2001), 465--588.

\bibitem{GT09}
B.~Green and T.~Tao, \emph{New bounds for {S}zemer\'{e}di's theorem. {II}. {A}
  new bound for {$r_4(N)$}}, Analytic number theory, Cambridge Univ. Press,
  Cambridge, 2009, pp.~180--204.

\bibitem{GT10b}
B.~Green and T.~Tao, \emph{An arithmetic regularity lemma, an associated
  counting lemma, and applications}, An irregular mind, Bolyai Soc. Math.
  Stud., vol.~21, J\'{a}nos Bolyai Math. Soc., Budapest, 2010, pp.~261--334.

\bibitem{GT10c}
B.~Green and T.~Tao, \emph{Yet another proof of {S}zemer\'{e}di's theorem}, An
  irregular mind, Bolyai Soc. Math. Stud., vol.~21, J\'{a}nos Bolyai Math.
  Soc., Budapest, 2010, pp.~335--342.

\bibitem{GT12}
B.~Green and T.~Tao, \emph{The quantitative behaviour of polynomial orbits on
  nilmanifolds}, Ann. of Math. (2) \textbf{175} (2012), 465--540.

\bibitem{GT17}
B.~Green and T.~Tao, \emph{New bounds for {S}zemer\'{e}di's theorem, {III}: a
  polylogarithmic bound for {$r_4(N)$}}, Mathematika \textbf{63} (2017),
  944--1040.

\bibitem{Hea87}
D.~R. Heath-Brown, \emph{Integer sets containing no arithmetic progressions},
  J. London Math. Soc. (2) \textbf{35} (1987), 385--394.

\bibitem{KM23}
Z.~Kelley and R.~Meka, \emph{Strong bounds for 3-progressions},
  arXiv:2302.05537.

\bibitem{Len23b}
J.~Leng, \emph{Efficient {E}quidistribution of {N}ilsequences},
  arXiv:2312.10772.

\bibitem{LSS24}
J.~Leng, A.~Sah, and M.~Sawhney, \emph{Improved bounds for five-term arithmetic
  progressions}, arXiv:2312.10776.

\bibitem{LSS24b}
J.~Leng, A.~Sah, and M.~Sawhney, \emph{Quasipolynomial bounds for the inverse
  theorem for the {G}owers {$U^{s+1}[N]$}-norm}, arXiv:2402.17994.

\bibitem{PP22}
S.~Peluse and S.~Prendiville, \emph{A polylogarithmic bound in the nonlinear
  {R}oth theorem}, Int. Math. Res. Not. IMRN (2022), 5658--5684.

\bibitem{Rot54}
K.~F. Roth, \emph{On certain sets of integers. {II}}, J. London Math. Soc.
  \textbf{29} (1954), 20--26.

\bibitem{San11}
T.~Sanders, \emph{On {R}oth's theorem on progressions}, Ann. of Math. (2)
  \textbf{174} (2011), 619--636.

\bibitem{San12}
T.~Sanders, \emph{On certain other sets of integers}, J. Anal. Math.
  \textbf{116} (2012), 53--82.

\bibitem{Sch77}
W.~M. Schmidt, \emph{Small fractional parts of polynomials}, Regional
  Conference Series in Mathematics, No. 32, American Mathematical Society,
  Providence, RI, 1977.

\bibitem{Sze70}
E.~Szemer\'{e}di, \emph{On sets of integers containing no four elements in
  arithmetic progression}, Number {T}heory ({C}olloq., {J}\'{a}nos {B}olyai
  {M}ath. {S}oc., {D}ebrecen, 1968), Colloq. Math. Soc. J\'{a}nos Bolyai,
  vol.~2, North-Holland, Amsterdam-London, 1970, pp.~197--204.

\bibitem{Sze75}
E.~Szemer\'{e}di, \emph{On sets of integers containing no {$k$} elements in
  arithmetic progression}, Acta Arith. \textbf{27} (1975), 199--245.

\bibitem{Sze90}
E.~Szemer\'{e}di, \emph{Integer sets containing no arithmetic progressions},
  Acta Math. Hungar. \textbf{56} (1990), 155--158.

\end{thebibliography}

\end{document}